\newtheorem{theorem}{Theorem}[section]
\newtheorem{proposition}[theorem]{Proposition}
\newtheorem{definition}[theorem]{Definition}
\newtheorem{conjecture}[theorem]{Conjecture}
\newtheorem{lemma}[theorem]{Lemma}
\newtheorem{remark}[theorem]{Remark}
\newcommand*{\QEDB}{\hfill\ensuremath{\square}}%
\begin{document}
\title{On the order of regular graphs with fixed second largest eigenvalue}
\author{Jae Young Yang$^1$\footnote{J.Y. Yang is partially supported by the National Natural Science
Foundation of China (No. 11371028).}, Jack H. Koolen$^{2,3}$\footnote{J.H. Koolen is partially supported by the National
Natural Science Foundation of China (No. 11471009 and No. 11671376).} 
\\ \\
\small ${}^1$ School of Mathematical Sciences,\\
\small Anhui University, \\
\small 111 Jiulong Road, Hefei, 230039, Anhui, PR China\\
\small $^2$ School of Mathematical Sciences,\\
\small University of Science and Technology of China, \\
\small 96 Jinzhai Road, Hefei, 230026, Anhui, PR China\\
\small $^3$ Wen-Tsun Wu Key Laboratory of CAS,\\
\small 96 Jinzhai Road, Hefei, 230026, Anhui, PR China\vspace{4pt}\\
\small {\tt e-mail : piez@naver.com, koolen@ustc.edu.cn}\vspace{-3pt}
}
\date{}
\maketitle

\begin{abstract}

Let $v(k, \lambda)$ be the maximum number of vertices of a connected $k$-regular graph with second largest eigenvalue at most $\lambda$. The Alon-Boppana Theorem implies that $v(k, \lambda)$ is finite when $k > \frac{\lambda^2 + 4}{4}$. In this paper, we show that for fixed $\lambda \geq1$, there exists a constant $C(\lambda)$ such that $2k+2 \leq v(k, \lambda) \leq 2k + C(\lambda)$ when $k > \frac{\lambda^2 + 4}{4}$. 

\end{abstract}

\textbf{Keywords} : smallest eigenvalue, Hoffman graph, Alon-Boppana Theorem, co-edge-regular graph

\textbf{AMS classification} : 05C50, 05C75, 05C62
\section{Introduction}

Let $v(k, \lambda)$ be the maximum order of a connected $k$-regular graph with second largest eigenvalue at most $\lambda$. For  $\lambda \geq 2\sqrt{k-1}$, it is known that $v(k, \lambda)$ is infinite from the existence of infinite families of bipartite regular Ramanujan graphs \cite{Ramanujan}. The Alon-Boppana Theorem \cite{Alon, ab1, ab2, ab3, ab4, nilli1, nilli2} states:

\begin{theorem}\label{alon}
For any integer $k\geq 3$ and real number $\lambda < 2\sqrt{k-1}$,  the number $v(k, \lambda)$ is finite.  
\end{theorem}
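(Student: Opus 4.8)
The plan is to argue by contraposition: I will show that a connected $k$-regular graph of sufficiently large order is forced to have second largest eigenvalue $\lambda_2$ arbitrarily close to $2\sqrt{k-1}$, so that once $\lambda < 2\sqrt{k-1}$ is fixed, only finitely many orders are compatible with $\lambda_2 \le \lambda$. The eigenvalue lower bound will come from the Courant--Fischer characterization
\[
\lambda_2 = \max_{\dim U = 2}\ \min_{0\neq \psi \in U}\ \frac{\langle \psi, A\psi\rangle}{\langle \psi,\psi\rangle},
\]
applied to a two-dimensional space of localized test functions. The key point is that exhibiting a single $2$-dimensional subspace on which the Rayleigh quotient stays large already forces $\lambda_2$ to be large.

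First I would convert order into diameter. Since $G$ is $k$-regular, the ball of radius $D$ about any vertex has at most $1 + k\sum_{j=0}^{D-1}(k-1)^j$ vertices, so a graph on $n$ vertices has diameter $D \geq \log_{k-1}\!\big(\tfrac{(k-2)(n-1)}{k}+1\big)$, which tends to infinity with $n$. Thus it suffices to bound $\lambda_2$ from below in terms of $D$. I would choose vertices $u,v$ with $d(u,v)=D$ and set $r = \lfloor D/2\rfloor - 1$, so that the balls $B_r(u)$ and $B_r(v)$ are disjoint and, moreover, have no edge between them. On each ball I place a radially decaying function: for a root $x$ let $\phi_x(y) = (k-1)^{-d(x,y)/2}$ when $d(x,y)\le r$ and $\phi_x(y)=0$ otherwise, and take $U = \mathrm{span}(\phi_u,\phi_v)$. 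Because the two supports are at distance at least $2$, all cross terms $\langle\phi_u, A\phi_v\rangle$ vanish and the quadratic form is block diagonal, so every $\psi\in U$ satisfies $\frac{\langle\psi,A\psi\rangle}{\langle\psi,\psi\rangle}\ge \min\big(R(\phi_u),R(\phi_v)\big)$, where $R(\cdot)$ denotes the Rayleigh quotient.

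The heart of the proof, and the step I expect to be the main obstacle, is the estimate $R(\phi_x) \ge 2\sqrt{k-1} - \epsilon(r)$ with $\epsilon(r)\to 0$. The exponent $-d(x,y)/2$ is chosen precisely because on the infinite $k$-regular tree the function $(k-1)^{-i/2}$ is the critical decay at the spectral radius $2\sqrt{k-1}$, so truncating it at radius $r$ produces only a boundary defect. Writing $V_i$ for the sphere of radius $i$, $c_i$ for the number of edges between $V_i$ and $V_{i+1}$, and $m_i$ for the number of edges inside $V_i$, one has $\langle\phi_x,A\phi_x\rangle = 2\sum_{i<r} c_i (k-1)^{-(2i+1)/2} + 2\sum_{i\le r} m_i (k-1)^{-i}$ and $\langle\phi_x,\phi_x\rangle = \sum_{i\le r} |V_i|(k-1)^{-i}$. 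The delicate point is that in a graph with short cycles the layer sizes $|V_i|$ and the edge counts $c_i,m_i$ need not match the tree values, so one cannot simply quote the tree computation; instead I would use the layer-by-layer degree identities $c_{i-1}+c_i+2m_i = k|V_i|$ together with the fact that every vertex of $V_i$ has at least one neighbour in $V_{i-1}$ (so $c_{i-1}\ge |V_i|$) to bound the ratio from below. The normalization makes each tree layer contribute the constant $\tfrac{k}{k-1}$ to the denominator, so $\langle\phi_x,\phi_x\rangle$ grows linearly in $r$ while the uncompensated boundary terms at radius $r$ stay bounded; this is what yields $\epsilon(r)=O(1/r)$.

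Finally I would assemble the pieces. Fix $\lambda < 2\sqrt{k-1}$ and choose $r_0$ so large that $2\sqrt{k-1}-\epsilon(r) > \lambda$ for all $r\ge r_0$. If $G$ has order exceeding the Moore bound corresponding to diameter $2r_0+2$, then $D \ge 2r_0+2$, hence $r\ge r_0$ and $\lambda_2(G) \ge \min\big(R(\phi_u),R(\phi_v)\big) > \lambda$, contradicting $\lambda_2(G)\le\lambda$. Therefore every connected $k$-regular graph with $\lambda_2\le\lambda$ has order at most this explicit bound, i.e. $v(k,\lambda)$ is finite.
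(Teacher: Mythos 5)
The paper never proves Theorem \ref{alon}; it is quoted as the Alon--Boppana Theorem with pointers to the literature \cite{Alon, nilli1, ab3}, so there is no internal proof to compare against. What you propose is, in essence, the classical Nilli-type argument from those references, and its architecture is sound: the Moore bound correctly converts large order into large diameter; with $r=\lfloor D/2\rfloor-1$ the two balls $B_r(u)$, $B_r(v)$ are disjoint with no edges between them, so the quadratic form on $U=\mathrm{span}(\phi_u,\phi_v)$ is block diagonal and the mediant inequality gives $R(\psi)\ge\min(R(\phi_u),R(\phi_v))$ for every $\psi\in U$; and Courant--Fischer then lower-bounds $\lambda_2$. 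The layer formulas you write for $\langle\phi_x,A\phi_x\rangle$ and $\langle\phi_x,\phi_x\rangle$ are also correct.

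The one place where your sketch, as written, would not survive refereeing is the justification of $\epsilon(r)=O(1/r)$: you assert that $\langle\phi_x,\phi_x\rangle$ ``grows linearly in $r$'', and that is false in general, because in a $k$-regular graph the spheres $V_i$ can pinch (cut vertices exist in regular graphs), so $\sum_{i\le r}|V_i|(k-1)^{-i}$ may stay bounded as $r\to\infty$. Fortunately, the two facts you name are exactly sufficient, but the mechanism is relative rather than absolute. Put $w_i=|V_i|(k-1)^{-i}$. Substituting the identity $c_{i-1}+2m_i+c_i=k|V_i|$ into $2\sqrt{k-1}\,\langle\phi_x,\phi_x\rangle$ shows that the coefficient of each $c_i$ with $i<r$ there is exactly $2(k-1)^{-(2i+1)/2}$, i.e.\ it matches its coefficient in $\langle\phi_x,A\phi_x\rangle$ precisely, while each $m_i$ carries a smaller coefficient (because $2\sqrt{k-1}\le k$). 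Hence the only deficit is the boundary term, at most $\frac{2\sqrt{k-1}}{k}c_r(k-1)^{-r}\le 2\sqrt{k-1}\,w_r$, so $R(\phi_x)\ge 2\sqrt{k-1}\bigl(1-w_r/\langle\phi_x,\phi_x\rangle\bigr)$. Now from $c_{i-1}\ge|V_i|$ and the degree identity one gets $c_i\le(k-1)|V_i|$ for $i\ge1$, and since also $|V_{i+1}|\le c_i$, the sequence $(w_i)_{i\ge1}$ is non-increasing; therefore $w_r\le\frac1r\sum_{i=1}^r w_i\le\frac1r\langle\phi_x,\phi_x\rangle$, giving $R(\phi_x)\ge 2\sqrt{k-1}(1-1/r)$ whether or not the norm grows. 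This exact accounting also explains why your vertex-rooted functions are legitimate even though the classical proofs root the test function at an edge: edge-rooting is only needed to repair the cruder Dirichlet-form estimate, in which the root's $k$ (rather than $k-1$) outgoing edges create a constant defect. With that one repair, your proof is complete and coincides with the argument in the cited literature.
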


In this paper, we will look at the behavior of $v(k, \lambda)$ when $\lambda$ is fixed and $k$ goes to infinity. Our main theorem is:

\begin{theorem}\label{main}
Let $\lambda$ be an integer at least $1$. Then there exists a constant $C_1(\lambda)$ such that $2k+2 \leq v(k, \lambda) \leq 2k + C_1(\lambda)$ holds for all $k > \frac{\lambda ^2 + 4}{4}$.
\end{theorem}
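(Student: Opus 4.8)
The plan is to prove the two inequalities separately: a construction giving the lower bound $v(k,\lambda)\ge 2k+2$, and a structural argument giving the upper bound $v(k,\lambda)\le 2k+C_1(\lambda)$. Since $\lambda\ge 1$, for the lower bound it suffices to exhibit, for each admissible $k$, a single connected $k$-regular graph on $2k+2$ vertices whose second largest eigenvalue is at most $1$.

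For the lower bound I would take the crown graph $H_k$, that is, the complete bipartite graph $K_{k+1,k+1}$ with a perfect matching deleted. Then $H_k$ is $k$-regular on $2k+2$ vertices, and it is connected whenever $k\ge 2$, which is guaranteed by $k>\frac{\lambda^2+4}{4}\ge\frac54$. Its adjacency matrix has the bipartite block form with off-diagonal block $J-I$ of size $k+1$; since $J-I$ has eigenvalues $k$ (once) and $-1$ (with multiplicity $k$), the spectrum of $H_k$ is $\{k,\,1^{(k)},\,(-1)^{(k)},\,-k\}$. Hence its second largest eigenvalue equals $1\le\lambda$, so $v(k,\lambda)\ge 2k+2$.

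The upper bound is the substantial part, and the first step is to pass to the complement. Write the eigenvalues of a connected $k$-regular graph $G$ on $n$ vertices as $k=\theta_0>\theta_1\ge\cdots\ge\theta_{n-1}$. The complement $\overline{G}$ is $(n-1-k)$-regular with eigenvalues $n-1-k$ and $-1-\theta_i$ for $1\le i\le n-1$; in particular its smallest eigenvalue is $-1-\theta_1\ge -(\lambda+1)$. Thus, setting $s:=\lambda+1$, the order $n$ of $G$ equals that of the regular graph $\overline{G}$, whose smallest eigenvalue is at least $-s$, and I would bound $n$ using the structure theory of such graphs. A first, elementary constraint comes from eigenvalue interlacing: an induced star $K_{1,m}$ has smallest eigenvalue $-\sqrt{m}$, so if $\lambda_{\min}(\overline{G})\ge -s$ then every vertex neighbourhood in $\overline{G}$ has independence number at most $s^2$. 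This already shows that $\overline{G}$ is locally close to a union of cliques.

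To convert this local information into a global bound I would invoke the theory of Hoffman graphs: for fixed $s$, a graph with smallest eigenvalue at least $-s$ (and, here, large degree, since $k$ is large) is representable by a fat Hoffman graph with boundedly many parameters, which yields a cover of $V(\overline{G})$ by cliques such that each vertex lies in a bounded number of them and any two meet in a bounded number of vertices, the bounds depending only on $\lambda$. Regularity of $\overline{G}$ then forces these cliques to have sizes comparable to the degree, while connectivity of $G$ (equivalently, that $\overline{G}$ is not a join) rules out the degenerate configurations and pins the number of ``large'' cliques to two up to a bounded defect; a counting argument, which I expect to run most cleanly after reducing to the co-edge-regular case, should then give $n\le 2k+C_1(\lambda)$. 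The main obstacle is precisely this last structural step: controlling the clique cover tightly enough that the leading coefficient comes out as $2$ rather than some larger multiple of $k$, and absorbing all the lower-order overlaps into a single constant $C_1(\lambda)$. This is where the bulk of the Hoffman-graph and co-edge-regular analysis is needed, and everything preceding it is comparatively routine.
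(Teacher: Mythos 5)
Your lower bound is correct and is in fact the same construction as the paper's: the crown graph is precisely the complement of the line graph of $K_{2,k+1}$, with spectrum $\{[k]^1,[1]^{k},[-1]^{k},[-k]^1\}$, so $v(k,\lambda)\ge v(k,1)\ge 2k+2$. Your opening move for the upper bound, passing to $\overline{G}$, which is $l$-regular with $\lambda_{\min}(\overline{G})\ge -(\lambda+1)$, is also the paper's first step. But beyond that the proposal has a genuine gap: the entire structural core is deferred (you yourself call it ``the main obstacle'' and say the counting ``should then give'' the bound), and the mechanism you sketch in its place would fail. You claim that connectivity of $G$ (equivalently, $\overline{G}$ not being a join) is what pins the number of large cliques in $\overline{G}$ to two. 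It does no such work: $G=C_n$ is connected and $2$-regular with $\lambda_2<2$, yet in $\overline{C_n}$ every vertex has only two non-neighbors, so for the relevant parameters the whole vertex set forms a single quasi-clique (one fat vertex in the associated Hoffman graph) while $n$ is unbounded. This family is excluded from the theorem only because $k=2$ violates $k>\frac{\lambda^2+4}{4}$; that is, the largeness of $k$ --- to which your sketch assigns no role beyond making the Hoffman-graph machinery applicable --- is exactly the hypothesis that must produce the contradiction. (Note also that quasi-cliques are not cliques; each vertex may have up to $m-1$ non-neighbors inside its own quasi-clique, so counting arguments phrased in terms of genuine cliques need care.)

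The missing idea is the paper's Proposition \ref{tool} together with a one-line degree count that your proposal never makes. Suppose toward a contradiction that $l\ge k+C_1(\lambda)$ with $C_1(\lambda)=M(\lambda+1)-1$. A vertex $x$ of $\overline{G}$ has exactly $k$ non-neighbors, and any non-neighbor $y$ has at most $k-1$ of its $l$ neighbors among the other non-neighbors of $x$; hence $x$ and $y$ have at least $l-(k-1)\ge M(\lambda+1)$ common neighbors. So in $\overline{G}$ every pair of vertices at distance $2$ has at least $M(\lambda+1)$ common neighbors, and $\lambda_{\min}(\overline{G})\ge -(\lambda+1)$. Proposition \ref{tool} --- whose proof via associated Hoffman graphs and a Ramsey argument is where all the real work lives --- then yields that $\overline{G}$ has diameter $2$ and $|\Gamma_2(x)|\le \lfloor \lambda+1\rfloor\lfloor(\lambda+1)^2\rfloor$, a constant depending only on $\lambda$. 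But $|\Gamma_2(x)|=k$, and one may assume $k$ exceeds that constant: for the finitely many smaller admissible $k$, the quantity $v(k,\lambda)$ is finite by Theorem \ref{alon} (since $k>\frac{\lambda^2+4}{4}$ is equivalent to $\lambda<2\sqrt{k-1}$) and can be absorbed into $C_1(\lambda)$. This is a contradiction, so $l\le k+C_1(\lambda)-1$ and $v(k,\lambda)=k+l+1\le 2k+C_1(\lambda)$. Without such a ``many common neighbors implies bounded second neighborhood'' statement, your plan has no way to convert the assumption $n>2k+C_1(\lambda)$ into anything contradictory: the clique-cover classification you hope for is not delivered by the structure theory you cite, and the ingredient you substitute for it (connectivity) is refuted by the cycle example above.
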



 For fixed real number $\lambda \geq 1$, define $T(\lambda)$ as 

$$T(\lambda) := \limsup_{k \rightarrow \infty} v(k,\lambda) -2k  .$$

Because of Theorem \ref{main}, $T(\lambda)$ is well-defined. We will show that $T(\lambda)\geq 2\lambda$ holds for fixed positive integer $\lambda$.

The proof of Theorem \ref{main} is based on the following proposition. In order to state this proposition, we need to introduce the next notion. For a vertex $x$ of a graph $G$, let $\Gamma_i(x)$ be the set of vertices which are at distance $i$ from $x$.

\begin{proposition}\label{tool}

Let $\lambda$ be a real number at least $1$. Then there exists a constant $M(\lambda) \geq \lambda^3$ such that, if $G$ is a graph satisfying 

\begin{enumerate}[(i)]

\item every pair of vertices at distance $2$ has at least $M(\lambda)$ common neighbors,

\item the smallest eigenvalue of $G$, $\lambda_{\min}(G),$ satisfies  $\lambda_{\min}(G)\geq -\lambda$,

\end{enumerate}

\noindent then $G$ has diameter $2$ and $|\Gamma_2(x) | \leq \lfloor \lambda \rfloor \lfloor\lambda^2 \rfloor$ for all $x \in V(G)$.

\end{proposition}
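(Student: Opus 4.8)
The engine will be the positive semidefinite representation coming from hypothesis (ii): since $\lambda_{\min}(G)\ge-\lambda$ means $A(G)+\lambda I\succeq0$, there are vectors $(u_v)_{v\in V(G)}$ with $\langle u_v,u_v\rangle=\lambda$, and $\langle u_u,u_v\rangle=1$ if $u\sim v$ while $\langle u_u,u_v\rangle=0$ if $u\neq v$ are non-adjacent. First I would record the local consequence: if $w_1,\dots,w_t$ are pairwise non-adjacent neighbours of a vertex $v$, then the $u_{w_i}$ are pairwise orthogonal, each of squared norm $\lambda$ and each having inner product $1$ with $u_v$; expanding $u_v=\sum_i\tfrac{1}{\lambda}u_{w_i}+r$ with $r$ orthogonal to all $u_{w_i}$ gives $\lambda=\|u_v\|^2\ge t/\lambda$, hence $t\le\lambda^2$. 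Thus every neighbourhood, and every induced subgraph of one, has independence number at most $\lfloor\lambda^2\rfloor$; this is where one factor $\lfloor\lambda^2\rfloor$ will come from.

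Next I would prove a density lemma for common neighbourhoods. Fix $x,y$ at distance $2$ with common-neighbour set $N$, $s:=|N|$. Applying $\zeta^{\top}(A+\lambda I)\zeta\ge0$ to the test vector $\zeta$ equal to $a$ on $x$ and $y$ and to $-b$ on every vertex of $N$ gives $2\lambda a^2-4sab+(s\lambda+2e(N))b^2\ge0$, whose discriminant condition forces $e(N)\ge s^2/\lambda-s\lambda/2$. Combined with $e(N)\le\binom{s}{2}$ this yields $s(2-\lambda)\le\lambda(\lambda-1)$, so for $1\le\lambda<2$ the common-neighbour number is bounded by a constant; choosing $M(\lambda)$ above that constant makes (i) impossible unless $\Gamma_2(x)=\varnothing$, settling that range. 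For $\lambda\ge2$ the lemma says $N$ is dense, and since $\alpha(N)\le\lfloor\lambda^2\rfloor$, a Ramsey argument ($M\ge R(q,\lfloor\lambda^2\rfloor+1)$) extracts from $N$ a clique $C$ of any prescribed size $q=q(\lambda,M)$, with $q\to\infty$ as $M\to\infty$.

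The key quantitative step is a complete-split-graph computation. If $t$ pairwise non-adjacent vertices of $\Gamma_2(x)$ are all adjacent to one common clique $C$ of size $q$, then the induced subgraph contains the join of an independent set of size $t$ with a clique of size $q$, whose least eigenvalue equals $\tfrac{1}{2}\bigl((q-1)-\sqrt{(q-1)^2+4tq}\bigr)$; requiring this to be $\ge-\lambda$ forces $t\le\lambda+\lambda(\lambda-1)/q$, i.e.\ $t\le\lfloor\lambda\rfloor$ once $q$ is large. This produces the second factor $\lfloor\lambda\rfloor$ and also drives the diameter bound: if $z\in\Gamma_3(x)$ and $y\in\Gamma_2(x)$ with $y\sim z$, then $z$ misses the whole clique $C\subseteq N_y$ while $y$ meets it, and feeding the global hypothesis (every distance-$2$ pair, including the pairs created along a shortest $x$--$z$ path, has $\ge M$ common neighbours) should let one iterate this attachment into a configuration whose least eigenvalue drops below $-\lambda$, contradicting (ii).

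The main obstacle is that this clean split-graph bound needs a single clique $C$ common to many vertices of $\Gamma_2(x)$, whereas a priori distinct $y$ may have disjoint common-neighbourhoods inside the possibly enormous set $\Gamma_1(x)$; equivalently, in the vector language the averaged clique-vectors $c_y:=q^{-1}\sum_{w\in C_y}u_w$ are near-unit with $\langle c_y,u_y\rangle=\langle c_y,u_x\rangle=1$, yet the cross-terms $\langle c_y,u_{y'}\rangle$ (counting how many of $y$'s distinguished common neighbours are adjacent to $y'$) are not directly controlled. I expect the heart of the argument to be a pigeonhole/sunflower step that, using $M$ large together with $\alpha(\Gamma_1(x))\le\lfloor\lambda^2\rfloor$, forces enough shared clique-structure to run the split-graph bound, followed by an accounting that assembles the two factors into $|\Gamma_2(x)|\le\lfloor\lambda\rfloor\lfloor\lambda^2\rfloor$. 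Fixing $M(\lambda)\ge\lambda^3$ large enough for both the Ramsey extraction and the density lemma is exactly what pins down the constant.
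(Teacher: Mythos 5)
Your local ingredients are all correct, and they closely parallel pieces of the paper's own argument: the Gram-vector bound $\alpha(\Gamma_1(v))\le\lfloor\lambda^2\rfloor$ plays the role of the paper's Lemma \ref{min}, the complete-split-graph computation forcing $t\le\lambda+\lambda(\lambda-1)/q$ plays the role of the paper's forbidden Hoffman graph $\mathfrak{h}^{(\lfloor \lambda +1 \rfloor)}$ with $\lfloor\lambda+1\rfloor$ fat vertices, and your Ramsey extraction with $M\ge R(q,\lfloor\lambda^2\rfloor+1)$ matches the paper's use of $R(n',t')$ (the paper kills the coclique branch with $K_{2,t'}$ from Lemma \ref{min2}, you kill it with the neighbourhood independence bound --- both work). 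The treatment of $1\le\lambda<2$ via the density lemma is also fine. But the proposal stops exactly where the real work lies, and you say so yourself. For each $y\in\Gamma_2(x)$ you produce a \emph{private} clique $C_y\subseteq\Gamma_1(x)\cap\Gamma_1(y)$, and your split-graph bound only limits how many pairwise non-adjacent vertices can hang off \emph{one fixed} clique. To conclude $|\Gamma_2(x)|\le\lfloor\lambda\rfloor\lfloor\lambda^2\rfloor$ one must show that all these cliques organize into at most $\lfloor\lambda\rfloor$ classes ``through $x$'', each class accounting for at most $\lfloor\lambda^2\rfloor$ vertices of $\Gamma_2(x)$. The ``pigeonhole/sunflower step'' you defer is precisely this, and it is not a routine pigeonhole: it is the content of the quasi-clique machinery the paper imports from Kim--Koolen--Yang --- that when $\widetilde{K}_{2m}$ is forbidden (which your hypotheses do guarantee), the relation $\equiv_n^m$ on large maximal cliques is an equivalence relation, quasi-cliques are well defined, and Proposition \ref{asso} converts forbidden Hoffman subgraphs of the associated Hoffman graph into forbidden induced subgraphs of $G$. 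Without an argument that cliques $C_y$, $C_{y'}$ with substantial overlap can be merged into a single class, and that inequivalent classes interact weakly enough for your two bounds to apply classwise, the factors $\lfloor\lambda\rfloor$ and $\lfloor\lambda^2\rfloor$ never multiply, so the proof is incomplete at its core.

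A secondary issue: your route to diameter $2$ (``iterate this attachment into a configuration whose least eigenvalue drops below $-\lambda$'') is likewise only a hope, but this part is genuinely easy once the $\Gamma_2$-bound is available, and you should decouple it from the clique analysis. The bound $|\Gamma_2(x)|\le\lfloor\lambda\rfloor\lfloor\lambda^2\rfloor\le\lambda^3<M(\lambda)$ is proved for every vertex $x$ without any assumption on the diameter; then if $y\in\Gamma_3(x)$, pick $z\in\Gamma_1(x)\cap\Gamma_2(y)$ and observe that every common neighbour of $y$ and $z$ lies in $\Gamma_2(x)$, while hypothesis (i) supplies at least $M(\lambda)$ such common neighbours --- a contradiction. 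This is exactly how the paper finishes, and it requires no further spectral input.
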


\begin{remark}

Neumaier \cite{-m} mentioned that Hoffman gave a very large bound on the intersection number $c_2$ of strongly regular graphs. This may imply that Proposition \ref{tool} was already known by Hoffman. However, we could not find it in the literature.

\end{remark}

To prove Proposition \ref{tool}, we use a combinatorial object named Hoffman graphs. The definition and basic properties of Hoffman graphs are given in Section 2. In Section 3, we prove Proposition \ref{tool}. In Section 4, we present some known facts on the number $v(k, \lambda)$, and, in Section 5, we prove Theorem \ref{main} by using Proposition \ref{tool}. In Section 6, we discuss the behavior of the number $T(\lambda)$ for a fixed positive integer $\lambda$. In the last section, we give two more applications of Proposition \ref{tool} for the classes of co-edge regular graphs and amply regular graphs.









\section{Hoffman graphs}

In this section, we introduce the definition and basic properties of Hoffman graphs. Hoffman graphs were defined by Woo and Neumaier \cite{Woo} following an idea of Hoffman \cite{ Hoff1977}. For more details or proofs, see \cite{Jang, KKY, Woo}.

\subsection{Definition and properties of Hoffman graphs}

\begin{definition}

A Hoffman graph $\mathfrak{h}$ is a pair $(H, \ell)$ of a graph $H$ and a labeling map $\ell : V(H) \rightarrow \{{\rm \bf fat,slim}\}$ satisfying two conditions:

\begin{enumerate}[(i)]

\item the vertices with label {\rm \bf fat} are pairwise non-adjacent,

\item every vertex with label {\rm \bf fat} has at least one neighbor with label {\rm \bf slim}.

\end{enumerate}

\end{definition}

The vertices with label {\rm \bf fat} are called {\it fat} vertices, and the set of fat vertices of $\mathfrak{h}$ are denoted by $V_{\rm  fat}(\mathfrak{h})$. The vertices with label {\rm \bf slim} are called {\it slim} vertices, and the set of slim vertices are denoted by $V_{\rm  slim}(\mathfrak{h})$. Now, we give some definitions.

\begin{definition}

For a Hoffman graph $\mathfrak{h}$, a Hoffman graph $\mathfrak{h}_1 = (H_1, \ell_1)$ is called an {\it induced Hoffman subgraph} of $\mathfrak{h}$ if $H_1$ is an induced subgraph of $H$ and $\ell(x) = \ell_1 (x)$ for all vertices $x$ of $H_1$.

\end{definition}

\begin{definition}

Two Hoffman graphs $\mathfrak{h}=(H, \ell)$ and $\mathfrak{h}'=(H', \ell')$ are called {\it isomorphic} if there exists a graph isomorphism $\psi$ from $H$ to $H'$ such that $\ell(x) = \ell'(\psi(x))$ for all vertices $x$ of $H$. 

\end{definition}

\begin{definition}

For a Hoffman graph $\mathfrak{h} = (H, \ell)$, let $A(H)$ be the adjacency matrix of $H$ with a labeling in which the fat vertices come last. Then

$$A(H) = \begin{pmatrix}
A_{\rm  slim} & C \\
C^T & O
\end{pmatrix},$$

\noindent where $A_{\rm  slim}$ is the adjacency matrix of the subgraph of $H$ induced by slim vertices and $O$ is the zero matrix.

The real symmetric matrix $S(\mathfrak{h}) = A_{\rm  slim} - CC^T$ is called the {\it special matrix} of $\mathfrak{h}$, and the eigenvalues of $\mathfrak{h}$ are the eigenvalues of $S(\mathfrak{h})$.

\end{definition}

For a Hoffman graph $\mathfrak{h}$, we focus on its smallest eigenvalue in this paper. Let $\lambda_{\min}(\mathfrak{h})$ denote the smallest eigenvalue of $\mathfrak{h}$. Now, we discuss some spectral properties of $\lambda_{\min}(\mathfrak{h})$ without proofs. 

\begin{lemma}{\rm \cite[Corollary 3.3]{Woo}} If $\mathfrak{h}'$ is an induced Hoffman subgraph of $\mathfrak{h}$, then $\lambda_{\min}(\mathfrak{h}') \geq \lambda_{\min}(\mathfrak{h})$ holds.

\end{lemma}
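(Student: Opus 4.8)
The plan is to prove the inequality directly from the definition of the special matrix, using the variational (Rayleigh quotient) characterization of the smallest eigenvalue. Write $\mathfrak{h} = (H,\ell)$ with special matrix $S(\mathfrak{h}) = A_{\rm slim} - CC^T$, indexed by the slim vertices $V_{\rm slim}(\mathfrak{h})$, and let $\mathfrak{h}'$ be the induced Hoffman subgraph with slim vertex set $X = V_{\rm slim}(\mathfrak{h}') \subseteq V_{\rm slim}(\mathfrak{h})$ and fat vertex set $F' = V_{\rm fat}(\mathfrak{h}') \subseteq V_{\rm fat}(\mathfrak{h})$.

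First I would compare $S(\mathfrak{h}')$ with the principal submatrix of $S(\mathfrak{h})$ on the index set $X$. Since $H'$ is an induced subgraph, the slim adjacency $A'_{\rm slim}$ is exactly the principal submatrix of $A_{\rm slim}$ on $X$. The only other difference is in the $CC^T$ term: the entry $(CC^T)_{xy}$ counts common fat neighbors of $x$ and $y$ among all of $V_{\rm fat}(\mathfrak{h})$, whereas $(C'(C')^T)_{xy}$ counts them only among $F'$. Writing $c_f \in \mathbb{R}^X$ for the restriction to $X$ of the column of $C$ indexed by a fat vertex $f$, the key identity is
$$S(\mathfrak{h}') = P + \sum_{f \in V_{\rm fat}(\mathfrak{h}) \setminus F'} c_f c_f^T,$$
where $P$ denotes the principal submatrix of $S(\mathfrak{h})$ on $X$. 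Verifying this identity, namely the bookkeeping of which rows and columns of $C$ survive the restriction, is the one place requiring care, but it is routine.

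With the identity in hand the inequality follows immediately. Let $v \in \mathbb{R}^X$ be a unit eigenvector of $S(\mathfrak{h}')$ for $\lambda_{\min}(\mathfrak{h}')$, and extend it by zeros to a vector $\tilde v \in \mathbb{R}^{V_{\rm slim}(\mathfrak{h})}$. Then $v^T P v = \tilde v^T S(\mathfrak{h}) \tilde v$, while each correction term contributes $(c_f^T v)^2 \geq 0$, so
$$\lambda_{\min}(\mathfrak{h}') = v^T S(\mathfrak{h}') v = \tilde v^T S(\mathfrak{h})\tilde v + \sum_{f} (c_f^T v)^2 \geq \tilde v^T S(\mathfrak{h}) \tilde v \geq \lambda_{\min}(\mathfrak{h}) \,\|\tilde v\|^2 = \lambda_{\min}(\mathfrak{h}).$$

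Equivalently, one can phrase this as two standard matrix facts applied in turn: deleting fat vertices adds a positive semidefinite matrix to the special matrix, so by Weyl's inequality it cannot decrease the smallest eigenvalue; and restricting to the slim vertices in $X$ passes to a principal submatrix, whose smallest eigenvalue is at least that of the full matrix by Cauchy interlacing. I expect no serious obstacle here: the entire content is the translation of ``induced Hoffman subgraph'' into these two matrix operations, after which the Rayleigh-quotient computation closes the argument in a single line.
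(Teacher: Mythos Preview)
Your argument is correct: the identity $S(\mathfrak{h}') = P + \sum_{f \notin F'} c_f c_f^T$ is exactly the right bookkeeping, and the Rayleigh-quotient chain (equivalently, Weyl plus Cauchy interlacing) finishes it cleanly. Note, however, that the paper does not supply its own proof of this lemma at all---it merely quotes the result from \cite[Corollary~3.3]{Woo}---so there is nothing in the present paper to compare your proof against; your write-up is in fact the standard proof one finds in the Hoffman-graph literature.
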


\begin{theorem}\label{OH}{\rm \cite[Theorem 2.2]{KKY}} Let $\mathfrak{h}$ be a Hoffman graph. For a positive integer $p$, let $G(\mathfrak{h}, p)$ be the graph obtained from $\mathfrak{h}$ by replacing every fat vertex of $\mathfrak{h}$ by a complete graph $K_p$ of $p$ slim vertices, and connecting all vertices of the $K_p$ to all neighbors of the original fat vertex by edges. Then

$$ \lambda_{\min}(G(\mathfrak{h}, p)) \geq \lambda_{\min}(\mathfrak{h}), $$

and

$$ \lim_{p\rightarrow \infty} \lambda_{\min}(G(\mathfrak{h}, p)) = \lambda_{\min}(\mathfrak{h}). $$

\end{theorem}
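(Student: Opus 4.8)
The plan is to compute the spectrum of $G(\mathfrak{h},p)$ explicitly through the equitable partition whose cells are the singletons $\{x\}$ for slim vertices $x$ together with the $p$-element cliques $K_p$ replacing the fat vertices. Writing $A(H)=\begin{pmatrix} A_{\rm slim} & C\\ C^T & O\end{pmatrix}$ as in the definition, a slim vertex $x$ is joined to all $p$ copies of a fat vertex $f$ exactly when $C_{xf}=1$, and two copies are adjacent iff they replace the same fat vertex. Hence the space of vectors constant on each cell is $A(G(\mathfrak{h},p))$-invariant, and its orthogonal complement consists of vectors supported on the cliques that sum to zero on each clique; every such vector is an eigenvector for the eigenvalue $-1$, with multiplicity $m(p-1)$, where $n=|V_{\rm slim}(\mathfrak{h})|$ and $m=|V_{\rm fat}(\mathfrak{h})|$. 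On the constant-on-cells space the action is the quotient matrix $\tilde A=\begin{pmatrix} A_{\rm slim} & pC\\ C^T & (p-1)I_m\end{pmatrix}$, which the diagonal scaling $\mathrm{diag}(I_n,\sqrt p\, I_m)$ conjugates to the symmetric matrix $M_p=\begin{pmatrix} A_{\rm slim} & \sqrt p\, C\\ \sqrt p\, C^T & (p-1)I_m\end{pmatrix}$. Consequently $\lambda_{\min}(G(\mathfrak{h},p))=\min\{\lambda_{\min}(M_p),\,-1\}$, and everything reduces to understanding $\lambda_{\min}(M_p)$.

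Next I would record that $\lambda_{\min}(\mathfrak{h})\le -1$ whenever $\mathfrak{h}$ has a fat vertex, the case with no fat vertex being trivial since then $G(\mathfrak{h},p)=H$: each fat vertex has a slim neighbor $x$, so $(CC^T)_{xx}\ge 1$ and the Rayleigh quotient of $S(\mathfrak{h})=A_{\rm slim}-CC^T$ at $e_x$ is at most $-1$. Write $\theta:=\lambda_{\min}(\mathfrak{h})$. For the inequality $\lambda_{\min}(G(\mathfrak{h},p))\ge\theta$ it then suffices to prove $\lambda_{\min}(M_p)\ge\theta$, i.e.\ that $M_p-\theta I$ is positive semidefinite. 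Since $\theta\le-1$, the lower-right block $(p-1-\theta)I_m$ is positive definite, so by the Schur complement this is equivalent to
\[
A_{\rm slim}-\theta I-\beta\,CC^T\succeq 0,\qquad \beta:=\frac{p}{p-1-\theta}.
\]
Here $p-1-\theta\ge p$ forces $0<\beta\le 1$, and the splitting $A_{\rm slim}-\beta CC^T=(A_{\rm slim}-CC^T)+(1-\beta)CC^T$ exhibits the first summand as $\succeq\theta I$ by definition of $\theta$ and the second as positive semidefinite, which gives the claim.

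Finally, for the limit I would exhibit a test vector showing $\lambda_{\min}(M_p)\to\theta$. Taking a unit eigenvector $u_0$ of $S(\mathfrak{h})$ for $\theta$ and the vector $\big(u_0,\,-\tfrac{\sqrt p}{p-1-\theta}C^Tu_0\big)$ --- precisely the near-eigenvector suggested by the Schur complement computation --- the Rayleigh quotient of $M_p$ evaluates to
\[
\frac{\theta+c\cdot\frac{(1+\theta)^2-p}{(p-1-\theta)^2}}{1+\frac{pc}{(p-1-\theta)^2}},\qquad c:=\|C^Tu_0\|^2,
\]
which tends to $\theta$ as $p\to\infty$. Hence $\limsup_p\lambda_{\min}(M_p)\le\theta$, and combined with the previous paragraph $\lambda_{\min}(M_p)\to\theta$; since $\theta\le-1$ we conclude $\lambda_{\min}(G(\mathfrak{h},p))=\min\{\lambda_{\min}(M_p),-1\}\to\theta$. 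I expect the main obstacle to be the bookkeeping of the first step --- verifying that the equitable-partition decomposition is exact, so that no eigenvalue below $-1$ is overlooked --- together with pinning down $\theta\le-1$, since that single inequality is what makes both the positive-definiteness of the Schur block and the monotonicity $0<\beta\le1$ point in the right direction.
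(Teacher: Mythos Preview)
Your argument is correct. The equitable-partition decomposition is exact (the orthogonal complement of the cell-constant space really is the $(-1)$-eigenspace of dimension $m(p-1)$, since distinct $K_p$'s are not joined and every slim neighbour sees all of a given $K_p$), the similarity of the quotient matrix $\tilde A$ with the symmetric $M_p$ is right, the bound $\theta:=\lambda_{\min}(\mathfrak h)\le -1$ whenever $m\ge1$ follows as you say from $(CC^T)_{xx}\ge1$, and the Schur-complement step together with the splitting $A_{\rm slim}-\beta CC^T=(A_{\rm slim}-CC^T)+(1-\beta)CC^T$ yields $M_p-\theta I\succeq0$; your Rayleigh-quotient computation for the test vector checks out verbatim and gives the limit.

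There is nothing to compare against in this paper, however: Theorem~\ref{OH} is quoted from \cite{KKY} and the paper explicitly announces (just before Lemma~2.5) that these spectral facts are stated \emph{without proofs}. So your proposal supplies a proof where the paper gives none; the equitable-partition plus Schur-complement route you take is the natural one and is essentially what one finds in the original source.
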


\subsection{Quasi-clique and associated Hoffman graph}

In this subsection, we introduce two terminologies, {\it quasi-clique} and {\it associated Hoffman graph}. Most of this section is explicitly formulated in \cite{KKY}. Note that the term {\it quasi-clique} in this paper is different from the term quasi-clique in $\cite{Woo}$.

For the rest of this section, let $\widetilde{K}_{2m}$ be the graph consisting of a complete graph $K_{2m}$ and a vertex which is adjacent to exactly $m$ vertices of the $K_{2m}$. For a positive integer $m$ at least $2$, let $G$ be a graph which does not contain $\widetilde{K}_{2m}$  as an induced subgraph. For a positive integer $n$ at least $(m+1)^2$, let $\mathcal{C}(n)$ be the set of maximal cliques of $G$ with at least $n$ vertices. Define the relation $\equiv_n^m$ on $\mathcal{C}(n)$ by $C_1 \equiv_n^m C_2$ if every vertex $x \in C_1$ has at most $m-1$ non-neighbors in $C_2$ and every vertex $y \in C_2$ has at most $m-1$ non-neighbors in $C_1$ for $C_1, C_2 \in \mathcal{C}(n)$.

\begin{lemma}{\rm \cite[Lemma 3.1]{KKY}} Let $m, n$ be two integers at least $2$ such that $n \geq (m+1)^2$. Then the relation $\equiv_n^m$ on $\mathcal{C}(n)$ is an equivalence relation. 

\end{lemma}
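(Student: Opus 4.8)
The plan is to verify the three defining properties of an equivalence relation, with transitivity carrying essentially all of the work. Reflexivity is immediate: for any $C \in \mathcal{C}(n)$, each vertex of the clique $C$ is adjacent to all the other vertices of $C$, so it has no non-neighbor in $C$ apart from (under some conventions) itself, which is at most $m-1$ since $m \geq 2$. Symmetry is built into the definition, as the condition defining $C_1 \equiv_n^m C_2$ is phrased symmetrically in $C_1$ and $C_2$. So I would dispatch these two cases in a sentence each and concentrate on transitivity.

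For transitivity, suppose $C_1 \equiv_n^m C_2$ and $C_2 \equiv_n^m C_3$; I want to conclude $C_1 \equiv_n^m C_3$. The idea is to argue by contradiction using the forbidden induced subgraph $\widetilde{K}_{2m}$. Assume some vertex $x \in C_1$ has at least $m$ non-neighbors $z_1, \dots, z_m$ in $C_3$. I will try to manufacture a copy of $\widetilde{K}_{2m}$ out of $\{x, z_1, \dots, z_m\}$ together with $m$ suitably chosen vertices $w_1, \dots, w_m$ drawn from the middle clique $C_2$. Specifically, I want each $w_i$ to be adjacent to $x$ and to every $z_j$; then $\{w_1, \dots, w_m, z_1, \dots, z_m\}$ is a $2m$-clique (the $w_i$ lie in the clique $C_2$, the $z_j$ lie in the clique $C_3$, and every $w_i$--$z_j$ pair is adjacent by construction), while $x$ is adjacent to exactly the $m$ vertices $w_1, \dots, w_m$ and to none of the $z_j$, giving precisely $\widetilde{K}_{2m}$.

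The heart of the argument is a counting estimate showing such $w_i$ exist inside $C_2$. The vertices of $C_2$ that fail to work are those non-adjacent to $x$ and those non-adjacent to some $z_j$. By $C_1 \equiv_n^m C_2$ there are at most $m-1$ of the former, and by $C_2 \equiv_n^m C_3$ each $z_j$ contributes at most $m-1$ of the latter, so the total number of unusable vertices in $C_2$ is at most $(m-1) + m(m-1) = m^2 - 1$. Since $|C_2| \geq n \geq (m+1)^2$, the number of usable vertices is at least $(m+1)^2 - (m^2 - 1) = 2m + 2 \geq m$, so I can select $m$ distinct candidates $w_1, \dots, w_m$. I would then record two clean consistency checks: the $w_i$ are automatically distinct from the $z_j$ (the former are adjacent to $x$, the latter are not), so the $2m$-clique genuinely has $2m$ vertices; and all adjacencies required for $\widetilde{K}_{2m}$ are exactly the ones imposed, with $x$ meeting precisely $m$ of the $2m$ clique vertices. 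This contradiction shows every $x \in C_1$ has at most $m-1$ non-neighbors in $C_3$; running the identical argument with the roles of $C_1$ and $C_3$ interchanged (keeping $C_2$ as the middle clique) gives the reverse condition, so $C_1 \equiv_n^m C_3$.

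The main obstacle I anticipate is bookkeeping rather than conceptual: getting the counting inequality tight enough that the hypothesis $n \geq (m+1)^2$ is exactly what is consumed, and being careful that the constructed vertex set has the right cardinality and induces precisely $\widetilde{K}_{2m}$ (in particular that no accidental adjacency between $x$ and some $z_j$, nor any coincidence $w_i = z_j$, spoils the configuration). Everything hinges on choosing $C_2$ as the intermediary and reading off the two ``at most $m-1$'' bounds from the two given relations; once the estimate $m^2 - 1 < (m+1)^2$ is in place, the forbidden-subgraph contradiction closes the argument.
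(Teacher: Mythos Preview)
The paper does not supply its own proof of this lemma; it merely quotes the statement from \cite{KKY}. So there is nothing to compare against in the present paper. That said, your argument is correct and is essentially the standard proof: reflexivity and symmetry are trivial, and transitivity follows by the counting estimate you describe, producing an induced $\widetilde{K}_{2m}$ and contradicting the standing hypothesis on $G$. Your bookkeeping is sound (the surplus $2m+2 \geq m$ even absorbs the possibility that $x$ or some $z_j$ lies in $C_2$), and the verification that the constructed $2m+1$ vertices induce exactly $\widetilde{K}_{2m}$ is handled correctly via the adjacency/non-adjacency of the $w_i$ and $z_j$ to $x$.
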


For a maximal clique $C \in \mathcal{C}(n)$, let $[C]_n^m$ denote the equivalence class containing $C$. Now, we are ready to define the term {\it quasi-clique}.

\begin{definition}

Let $m, n$ be two integers at least $2$ such that $n \geq (m+1)^2$. For a maximal clique $C \in \mathcal{C}(n)$, we define the quasi-clique $Q[C]_n^m$ with respect to the pair $(m,n)$ of $G$, as the subgraph of $G$ induced on the vertices which have at most $m-1$ non-neighbors in $C$.

\end{definition}

By {\rm \cite[Lemma 3.2]{KKY}} and  {\rm \cite[Lemma 3.3]{KKY}}, the quasi-clique $Q[C]_n^m$ is well-defined for $C \in \mathcal{C}(n)$.

Now we introduce the associated Hoffman graphs. In the next proposition, we present a result which is needed to show Proposition 1.3.

\begin{definition}

Let $m, n$ be two integers at least $2$ such that $n \geq (m+1)^2$. Let $[C_1]_n^m, [C_2]_n^m, \cdots, [C_t]_n^m$ be all the equivalence classes of $G$ under $\equiv_n^m$. The associated Hoffman graph $\mathfrak{g} = \mathfrak{g}(G, m, n)$ is the Hoffman graph with the following properties.

\begin{enumerate}[(i)]

\item $V_{\rm slim}(\mathfrak{g}) = V(G)$, and $V_{\rm fat}(\mathfrak{g}) = \{F_1, \dots, F_t\}$, where $t$ is the number of equivalence classes of $G$ under $\equiv_n^m$,

\item the induced Hoffman subgraph of $\mathfrak{g}$ on $V_{\rm slim}(\mathfrak{g})$ is isomorphic to $G$,

\item the fat vertex $F_i$ is adjacent to all vertices of the quasi-clique $Q[C_i]_n^m$ for $i=1,2,\dots, t$.

\end{enumerate}

\end{definition}

\begin{proposition}\label{asso}{\rm \cite[Proposition 4.1]{KKY}} There exists a positive integer $n = n(m, \phi, \sigma,  p) \geq (m+1)^2$ such that for any integer $q \geq n$, and any Hoffman graph $\mathfrak{h}$ with at most $\phi$ fat vertices and at most $\sigma$ slim vertices, the graph  $G(\mathfrak{h}, p)$ is an induced subgraph of $G$, provided that the graph  $G$ satisfies the following conditions:

\begin{enumerate}[(i)]

\item the graph $G$ does not contain  $\widetilde{K}_{2m}$ as an induced subgraph,

\item the associated Hoffman graph $\mathfrak{g} = \mathfrak{g}(G, m, q)$ contains $\mathfrak{h}$ as an induced Hoffman subgraph.

\end{enumerate}

\end{proposition}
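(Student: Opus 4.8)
The plan is to make the fat vertices of $\mathfrak{h}$ concrete inside $G$. Since $\mathfrak{h}$ is an induced Hoffman subgraph of $\mathfrak{g} = \mathfrak{g}(G,m,q)$, its slim vertices are identified with a set $S \subseteq V(G)$ (with $|S| \le \sigma$) inducing in $G$ the slim part of $\mathfrak{h}$, and its fat vertices are identified with distinct fat vertices of $\mathfrak{g}$, that is, with pairwise inequivalent maximal cliques $C_1, \dots, C_s$ ($s \le \phi$), each of size at least $q$, where $F_j$ is adjacent in $\mathfrak{g}$ to exactly the slim vertices lying in the quasi-clique $Q[C_j]_q^m$. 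I would construct the desired copy of $G(\mathfrak{h},p)$ by sending the slim vertices to $S$ and replacing each $F_j$ by a set $P_j \subseteq C_j$ with $|P_j| = p$. For this to realise $G(\mathfrak{h},p)$ as an \emph{induced} subgraph I need: (b) every vertex of $P_j$ is adjacent to each $s \in S$ with $s \sim F_j$; (c) every vertex of $P_j$ is non-adjacent to each $s \in S$ with $s \not\sim F_j$; (d) the $P_j$ are disjoint from $S$ and from one another; and (e) $P_j$ is completely non-adjacent to $P_{j'}$ for $j \ne j'$. The requirement that each $P_j$ induce a $K_p$ is automatic since $P_j \subseteq C_j$.

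The engine of the argument is a dichotomy from the hypothesis that $G$ is $\widetilde{K}_{2m}$-free: for any maximal clique $C$ with $|C| \ge 2m$ and any vertex $v \notin C$, if $v$ had at least $m$ neighbours and at least $m$ non-neighbours in $C$, then choosing $m$ of each would produce an induced $\widetilde{K}_{2m}$; hence $v$ has at most $m-1$ neighbours or at most $m-1$ non-neighbours in $C$. Applying this to the $C_j$ converts the slim--fat adjacencies of $\mathfrak{h}$ into clean local data: a vertex $s \in S$ with $s \sim F_j$ lies in $Q[C_j]_q^m$ and so has at most $m-1$ non-neighbours in $C_j$, while a vertex $s \in S$ with $s \not\sim F_j$ lies outside $Q[C_j]_q^m$, hence has at least $m$ non-neighbours and therefore, by the dichotomy, at most $m-1$ neighbours in $C_j$. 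Thus for each $s$ only at most $m-1$ vertices of $C_j$ obstruct (b) or (c).

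The main obstacle is controlling the interaction between two different cliques, which is what (e) demands; this is the step I would spend the most effort on. I would prove that if $C \not\equiv_q^m C'$ (with both cliques large) then $|C \cap Q[C']_q^m| \le m-1$, so only boundedly many vertices of $C_j$ are ``close'' to another $C_{j'}$. The idea is again to manufacture a forbidden $\widetilde{K}_{2m}$: if at least $m$ vertices $x_1,\dots,x_m$ of $C$ lie in $Q[C']_q^m$ and some vertex $z \in C$ is far from $C'$ (at most $m-1$ neighbours there), then since $q$ is huge one can pick $y_1,\dots,y_m \in C'$ adjacent to all $x_i$ but not to $z$; the set $\{x_1,\dots,x_m,y_1,\dots,y_m\}$ is a $K_{2m}$ and $z$ is adjacent to exactly the $x_i$, giving an induced $\widetilde{K}_{2m}$. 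The remaining case, where every vertex of $C$ is close to $C'$, is reduced by an edge count to the symmetric situation (most of $C'$ must then be close to $C$, while $C \not\equiv_q^m C'$ forces a far vertex in $C'$), and the same construction with the roles of $C$ and $C'$ exchanged again yields a contradiction. This produces the uniform bound $|C_j \cap Q[C_{j'}]_q^m| \le m-1$, and as a by-product $C_j$ and $C_{j'}$ become disjoint after deleting these vertices, since any shared vertex lies in both quasi-cliques.

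Finally I would assemble the embedding. From each $C_j$ delete the vertices in $S$ (at most $\sigma$), the vertices obstructing (b) or (c) (at most $2\sigma(m-1)$), and the vertices lying in $Q[C_{j'}]_q^m$ for some $j' \ne j$ (at most $(\phi-1)(m-1)$); the surviving set $C_j^{\mathrm{good}}$ has size at least $q - R$ for a constant $R = R(m,\sigma,\phi)$, the sets $C_j^{\mathrm{good}}$ are pairwise disjoint, and every vertex of $C_j^{\mathrm{good}}$ satisfies (b) and (c) and has at most $m-1$ neighbours in each other $C_{j'}$. Hence I can choose the $P_j$ greedily, clique by clique: when processing $C_j$, the previously chosen sets forbid at most $(\phi-1)p(m-1)$ vertices of $C_j^{\mathrm{good}}$, so a valid $P_j$ exists provided $n$ is taken so that $q \ge R + (\phi-1)p(m-1) + p$. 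Choosing $n = n(m,\phi,\sigma,p)$ this large, and large enough for the two $\widetilde{K}_{2m}$-free bounds above to apply, makes the selection succeed, and the map sending the slim vertices to $S$ and each $F_j$ to the clique $P_j$ is, by (b)--(e), an isomorphism onto an induced copy of $G(\mathfrak{h},p)$ in $G$.
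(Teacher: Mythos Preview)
The paper does not prove Proposition~\ref{asso}; it is quoted verbatim from \cite[Proposition~4.1]{KKY} and used as a black box. So there is no in-paper argument to compare against. That said, your reconstruction is the natural one and is essentially correct: identify the slim part of $\mathfrak h$ with $S\subseteq V(G)$, realise each fat vertex $F_j$ by a $p$-subset $P_j$ of its clique $C_j$, and use the $\widetilde K_{2m}$-free dichotomy (every vertex outside a large clique has at most $m-1$ neighbours or at most $m-1$ non-neighbours in it) to control the bad vertices in each $C_j$.

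One step deserves a cleaner formulation. In your key lemma you write that in the ``remaining case'' (every vertex of $C$ close to $C'$) an edge count shows most of $C'$ is close to $C$. The literal edge count only gives that the number of far vertices of $C'$ is at most $|C|(m-1)/m$, which is not a constant. What actually works, and what your final sentence really describes, is a direct construction: since $C\not\equiv_q^m C'$ and every vertex of $C$ lies in $Q[C']_q^m$, some $w\in C'$ has at least $m$ non-neighbours in $C$; pick $y_1,\dots,y_m\in C$ non-adjacent to $w$, note each $y_j$ has at most $m-1$ non-neighbours in $C'$, and choose $x_1,\dots,x_m\in C'\setminus\{w\}$ adjacent to every $y_j$ (possible once $q\ge (m+1)^2$). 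Then $\{x_i,y_j\}$ is a $K_{2m}$ with $w$ adjacent to exactly the $x_i$, yielding the forbidden $\widetilde K_{2m}$. With this fix, the bound $|C_j\cap Q[C_{j'}]_q^m|\le m-1$ follows, and your deletion/greedy argument goes through with the stated threshold $n\ge \sigma+2\sigma(m-1)+(\phi-1)(m-1)+(\phi-1)p(m-1)+p$ (and $n\ge(m+1)^2$). Minor bookkeeping: your $2\sigma(m-1)$ for conditions (b)--(c) double-counts, since each $s\in S$ is on exactly one side; $\sigma(m-1)$ suffices, but your bound is still valid.
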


\section{Main tool}

In this section, we prove Proposition \ref{tool}, which is the main tool of this paper. Before we prove Proposition \ref{tool}, we first show two lemmas. 

Let $H$ be a graph. Define $\mathfrak{q}(H)$ the Hoffman graph obtained by attaching one fat vertex to all vertices of $H$. Then $\lambda_{\min}(\mathfrak{q}(H)) = -\lambda_{\max}(\overline{H})$, where $\lambda_{\max}(\overline{H})$ is the maximal eigenvalue of the complement $\overline{H}$ of $H$. The Perron-Frobenius Theorem implies the following lemma.

\begin{lemma}\label{min}

Let $H$ be a graph with an isolated vertex $x$. If $\lambda_{\min}(\mathfrak{q}(H)) \geq -\lambda$ for some real number $\lambda \geq 1$, then $H$ has at most $\lfloor \lambda^2 \rfloor + 1$ vertices.
 
\end{lemma}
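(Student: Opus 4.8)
The plan is to translate the hypothesis into a statement about the spectral radius of $\overline{H}$ and then exploit the fact that the isolated vertex $x$ becomes a universal (dominating) vertex in $\overline{H}$. By the identity recorded just before the statement, $\lambda_{\min}(\mathfrak{q}(H)) = -\lambda_{\max}(\overline{H})$, so the hypothesis $\lambda_{\min}(\mathfrak{q}(H)) \geq -\lambda$ is equivalent to
$$\lambda_{\max}(\overline{H}) \leq \lambda.$$
Thus the whole problem reduces to bounding the number of vertices of a graph whose largest adjacency eigenvalue is at most $\lambda$ and which possesses a universal vertex.

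First I would observe that since $x$ is isolated in $H$, in the complement $\overline{H}$ the vertex $x$ is adjacent to every other vertex. Writing $n := |V(H)| = |V(\overline{H})|$, this means that $\overline{H}$ contains the star $K_{1, n-1}$ centered at $x$ as a spanning (not necessarily induced) subgraph, i.e. every edge of this star is an edge of $\overline{H}$. The key step is then a monotonicity argument for the spectral radius: since the adjacency matrix of $K_{1,n-1}$ is entrywise dominated by that of $\overline{H}$ under a common vertex ordering, the Perron--Frobenius Theorem yields
$$\lambda_{\max}(\overline{H}) \geq \lambda_{\max}(K_{1,n-1}) = \sqrt{n-1}.$$

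Combining the two displays gives $\sqrt{n-1} \leq \lambda$, hence $n - 1 \leq \lambda^2$; since $n - 1$ is an integer, this forces $n - 1 \leq \lfloor \lambda^2 \rfloor$, that is $n \leq \lfloor \lambda^2 \rfloor + 1$, which is the claimed bound. I expect the only point requiring any care to be the monotonicity step: one must invoke that for nonnegative symmetric matrices $0 \leq B \leq A$ entrywise implies $\rho(B) \leq \rho(A)$, and identify $\lambda_{\max}(K_{1,n-1})$ with $\sqrt{n-1}$ as the largest root of the star's characteristic polynomial. Everything else is a direct substitution, so the argument is short once the complementation and the star subgraph are set up.
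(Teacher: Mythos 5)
Your proposal is correct and follows essentially the same route as the paper's proof: reduce via $\lambda_{\min}(\mathfrak{q}(H)) = -\lambda_{\max}(\overline{H})$, observe that the isolated vertex becomes universal in $\overline{H}$ so that $K_{1,n-1}$ is a subgraph, and apply Perron--Frobenius monotonicity to get $\lambda_{\max}(\overline{H}) \geq \sqrt{n-1}$. You merely spell out the final integrality step $n-1 \leq \lfloor \lambda^2 \rfloor$ that the paper leaves implicit.
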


\begin{proof}

Let $n$ be the number of vertices of $H$. Since $x$ is an isolated vertex of $H$, $x$ is adjacent to all other vertices of $H$ in the complement $\overline{H}$ of $H$. By the Perron-Frobenius Theorem, we have 

$$  \lambda_{\max}(\overline{H}) \geq \lambda_{\max}(K_{1,n-1}) = \sqrt{n-1}$$.

This shows the lemma.
\end{proof}

\begin{lemma}\label{min2} Let $\lambda$ be a real number at least $1$. Then there exist minimum positive integers $t'(\lambda)$ and $m'(\lambda)$ such that both $\lambda_{\min}(K_{2,t'(\lambda)}) < -\lambda$ and $\lambda_{\min}(\widetilde{K}_{2m'(\lambda)}) < -\lambda$ hold.

\end{lemma}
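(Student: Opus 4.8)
The plan is to handle the two graph families separately, since the eigenvalues of the complete bipartite graph $K_{2,t}$ can be written down explicitly, whereas $\widetilde{K}_{2m}$ needs a short spectral computation. In both cases the real content is to show that the relevant smallest eigenvalue tends to $-\infty$; once that is established, the set of positive integers for which the strict inequality holds is nonempty, and by well-ordering it has a least element, which we take to be $t'(\lambda)$ respectively $m'(\lambda)$.

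For $K_{2,t}$ I would first recall that the nonzero eigenvalues of $K_{a,b}$ are $\pm\sqrt{ab}$, so that $\lambda_{\min}(K_{2,t})=-\sqrt{2t}$. This is a strictly decreasing function of $t$ tending to $-\infty$, so the set of positive integers $t$ with $-\sqrt{2t}<-\lambda$ (equivalently $t>\lambda^{2}/2$) is nonempty and has a least element $t'(\lambda)$. This part presents no obstacle.

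For $\widetilde{K}_{2m}$ the idea is to use its obvious equitable partition into three classes: the $m$ clique-vertices adjacent to the extra vertex, the $m$ clique-vertices not adjacent to it, and the extra vertex itself. Reading off the intersection numbers yields the $3\times 3$ quotient matrix
\[
B=\begin{pmatrix} m-1 & m & 1 \\ m & m-1 & 0 \\ m & 0 & 0\end{pmatrix},
\]
and since the eigenvalues of the quotient matrix of an equitable partition are among the eigenvalues of the graph, we obtain in particular $\lambda_{\min}(\widetilde{K}_{2m})\le\lambda_{\min}(B)$ (one can check that the remaining eigenvalues are all $-1$, arising from vectors supported on one class and summing to zero there, but the one-sided bound is all the lemma needs). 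So it suffices to show that the smallest root of the characteristic polynomial
\[
f(\theta)=\theta^{3}-2(m-1)\theta^{2}-(3m-1)\theta+m(m-1)
\]
tends to $-\infty$ as $m\to\infty$. Rather than solving the cubic, I would argue by a sign change: since $\det B=-m(m-1)<0$ and $\operatorname{tr}B=2(m-1)>0$, the matrix $B$ has exactly one negative eigenvalue $\theta_{3}=\lambda_{\min}(B)$, while its other two eigenvalues $\theta_{1}\ge\theta_{2}$ are strictly positive. Fixing any real $T\ge 1$ and evaluating
\[
f(-T)=m(m-1)-2(m-1)T^{2}+(3m-1)T-T^{3},
\]
the positive coefficient of $m^{2}$ gives $f(-T)\to+\infty$ as $m\to\infty$ with $T$ fixed; hence $f(-T)>0$ for all large $m$. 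Because $-T<0<\theta_{2}\le\theta_{1}$, the point $-T$ cannot lie to the right of $\theta_{2}$, so $f(-T)>0$ forces $-T$ into the interval $(\theta_{3},\theta_{2})$, i.e. $\theta_{3}<-T$. Taking $T=\lambda$ then yields $\lambda_{\min}(\widetilde{K}_{2m})\le\theta_{3}<-\lambda$ for all sufficiently large $m$, and the least such $m$ is $m'(\lambda)$.

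The only delicate point lies in the second family, and the mild obstacle there is bookkeeping: one must be certain that the root controlled by the sign of $f(-T)$ is indeed the smallest eigenvalue, i.e. that $-T$ lies just to the right of $\theta_{3}$ and below both positive roots, rather than beyond $\theta_{1}$. This is exactly what the determinant-and-trace count settles, by pinning down that $B$ has precisely one negative eigenvalue and two positive ones. Everything else reduces to the explicit finite computations of $B$, $\det B$, and $f(-T)$.
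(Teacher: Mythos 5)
Your proposal is correct and follows essentially the same route as the paper: the explicit value $\lambda_{\min}(K_{2,t})=-\sqrt{2t}$ for the bipartite family, and the $3\times 3$ equitable-partition quotient matrix for $\widetilde{K}_{2m}$ (your matrix is the paper's up to a permutation of the three classes), whose smallest eigenvalue is shown to tend to $-\infty$. The only difference is that you carry out explicitly, via the characteristic polynomial and the determinant/trace sign count, the verification that the paper dismisses as ``easily checked,'' and you correctly note that the one-sided bound $\lambda_{\min}(\widetilde{K}_{2m})\leq\lambda_{\min}(B)$ suffices where the paper asserts equality.
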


\begin{proof}
Since $\lambda_{\min}(K_{2,t}) = -\sqrt{2t}$ and $\lambda_{\min}(\widetilde{K}_{2m})$ is the smallest eigenvalue of the matrix

$$ \begin{pmatrix}
m-1 & m & 0 \\
m & m-1 & 1\\
0 & m & 0
\end{pmatrix},$$
it is easily checked that 

$$ \lim_{t \rightarrow \infty} \lambda_{\min}(K_{2,t}) = \lim_{m \rightarrow \infty} \lambda_{\min}(\widetilde{K}_{2m}) = -\infty. $$
This shows the existence of $t'(\lambda)$ and $m'(\lambda)$.
\end{proof}

\vspace{0.2cm}
\noindent{\bf Proof of Proposition \ref{tool}.} 
First, we consider the Hoffman graph $\mathfrak{h}^{(\lfloor \lambda +1 \rfloor)}$ with $\lfloor \lambda +1 \rfloor$ fat vertices adjacent to one slim vertex. Then $\lambda_{\min}(\mathfrak{h}^{(\lfloor \lambda +1 \rfloor)}) = -\lfloor \lambda +1 \rfloor < -\lambda$, so there exists an positive integer $p_{0}$ such that  $\lambda_{\min}(G(\mathfrak{h}^{(\lfloor \lambda +1 \rfloor)}, p_{0})) < -\lambda$ by Theorem \ref{OH}.

Next, let $\{H_1, \cdots, H_r\}$ be the set of pairwise non-isomorphic graphs on $\lfloor \lambda^2  \rfloor + 2$ vertices with an isolated vertex. By Lemma \ref{min}, $\lambda_{\min}(\mathfrak{q}(H_i)) < -\lambda$ holds for all $i = 1, \dots, r$. For each $i=1,\dots, r$, there exists positive integers $p_i$ such that $\lambda_{\min}(G(\mathfrak{q}(H_i), p_i)) < -\lambda$ by Theorem \ref{OH}. Set  $p' = \max p_i$.

For the two integers $t' = t'(\lambda)$ and $m' = m'(\lambda)$ of Lemma \ref{min2}, let $n' = n(m', \lfloor \lambda +1 \rfloor, \lfloor \lambda^2 +2 \rfloor, p')$ where $n(m', \lfloor \lambda +1 \rfloor, \lfloor \lambda^2 +2 \rfloor, p')$ is the integer in Proposition \ref{asso}. This means that the associated Hoffman graph $\mathfrak{g}(G, m', n')$ does not contain any of the Hoffman graphs in the set $\{\mathfrak{h}^{(\lfloor \lambda +1 \rfloor)}\} \cup \{\mathfrak{q}(H_i) \mid i =1, .\ldots, r\}$ as induced subgraphs. This implies that the following two conditions hold:

\begin{enumerate}[(i)]

\item for each vertex $x$ of $\mathfrak{g}(G, m', n')$ and one of its fat neighbor $f_x$, the number of vertices which is adjacent to $f_x$ and non-adjacent to $x$ is at most $\lfloor \lambda^2 \rfloor$,

\item every vertex $x$ of $\mathfrak{g}(G, m', n')$ has at most $\lfloor \lambda \rfloor$ fat neighbors.

\end{enumerate}

Now we want to assume that for any two distinct non-adjacent vertices $x$ and $y$ of $G$, they have a common fat neighbor in $\mathfrak{g}(G, m', n')$. To do so, let $M(\lambda)$ be the number $\max\{R(n', t'), \lfloor \lambda^3 +1\rfloor\}$, where $R(n', t')$ denotes the Ramsey number. Recall that the Ramsey number $R(s,t)$ is the minimal positive integer $n$ such that any graph with order $n$ contains a clique of order $s$ or a coclique of order $t$. The property of Ramsey number implies that for two vertices $x, y$ at distance $2$, their common neighborhood contains a clique of size $n'$ or a coclique of size $t'$. Hence, there exists a fat vertex which is adjacent to both $x$ and $y$ in $\mathfrak{g}(G, m', n')$. From (i) and (ii), we conclude that $|\Gamma_2(x)| \leq \lfloor \lambda \rfloor \lfloor\lambda^2  \rfloor$ for all $x \in V(G)$.

Assume that there exists a vertex $y \in \Gamma_3(x)$ for some $x$. Then there exists a vertex $z$ such that $z \in \Gamma_1(x)$ and $z \in \Gamma_2(y)$. The common neighborhood of $y$ and $z$ have at least size $M(\lambda)$ and is contained in $\Gamma_2(x)$. This is impossible. Hence, $G$ has diameter $2$. \QEDB

\section{Some known facts on the number $v(k, \lambda)$}

In this section we give some known facts on the number $v(k, \lambda)$.
We start from the case $\lambda < 0$. If a connected graph $G$ is not complete, $G$ contains $K_{1,2}$ as an induced subgraph. Then by interlacing, $G$ has second largest eigenvalue at least $0$. It implies that if a graph $G$ has negative second largest eigenvalue, $G$ is a complete graph. Thus, $v(k, \lambda) = k+1$ for $\lambda < 0$ and the unique graph with the equality case is the complete graph $K_{k+1}$. 

For $\lambda = 0$, a regular graph with non-positive second largest eigenvalue is a complete multipartite graph \cite[Corollary 3.5.4]{drg}. Among $k$-regular complete multipartite graphs, we can check that the complete bipartite graph $K_{k,k}$ maximizes the number of vertices. Hence $v(k, 0) = 2k$ and the unique graph with the equality case is the complete multipartite graph $K_{k,k}$.

For $\lambda = 1$, let $G$ be a regular graph with second largest eigenvalue at most $1$. Then the complement of $G$ is a regular graph with smallest eigenvalue at least $-2$. Since such regular graphs are classified in \cite{seidel}, we can find the all values of $v(k, 1)$ \cite[Theorem 3.2]{Nozaki}. Especially, $v(k, 1) = 2k+2$ when $k \geq 11$. The equality case is obtained by the complement of the line graph of $K_{2,k+1}$. Note that $2k+2 \leq v(k, 1) \leq 2k+6$ for all $k$.

For other values of $\lambda > 1$,  Cioab\u{a} et al. \cite{Nozaki} found several values of $v(k, \lambda)$ by using a linear programming method. Using the method of  Cioab\u{a} et al., it can be shown that $v(k, \lambda) \leq (\lambda + 2)k +\lambda^3 +\lambda^2 - \lambda$ if $k$ is large enough. Theorem \ref{main} improves this result significantly.

\section{Proof of the main theorem}
Now, we are ready to prove Theorem \ref{main}.

\vspace{0.2cm}

\noindent {\bf Proof of Theorem \ref{main}.} Let $G$ be a $k$-regular graph with second largest eigenvalue $\lambda$ and $v(k, \lambda)$ vertices. Since $v(k, 1) \geq 2k+2$ and $\lambda \geq 1$, $v(k, \lambda) \geq 2k+2$. We only need to show Theorem \ref{main} for large enough $k$, so we may assume that $k > \lambda(\lambda+1)(\lambda+2)$. Now, we consider the complement $\overline{G}$ of $G$. Then $\overline{G}$ is a $l$-regular graph with smallest eigenvalue $-1-\lambda$ and $v(k, \lambda)$ vertices, where $l = v(k, \lambda) - k - 1 \geq k+1$.

Suppose that $l \geq k + C_1(\lambda)$, where $C_1(\lambda) =  M(\lambda+1) -1$, where $M(\lambda)$ is the constant of Proposition \ref{tool}. Let $x$ be a vertex of $\overline{G}$. Then the set of non-neighbors of $x$ has size $k$ and has at least $M(\lambda+1)$ neighbors in the neighborhood of $x$ since $\overline{G}$ is $l$-regular and $l\ \geq k + C_1(\lambda)$. It implies that the set of non-neighbors of $x$ is exactly $\Gamma_2(x)$. By Proposition \ref{tool}, $G$ has diameter $2$ and $|\Gamma_2(x)| \leq (\lambda +1)(\lambda^2 +2\lambda)$. This contradicts to the assumption $k > (\lambda +1)(\lambda^2 +2\lambda)$. Hence, $\l \leq k + C_1(\lambda) -1$ and $v(k, \lambda) = 1 + k + l \leq 2k + C_1(\lambda)$. \QEDB

\section{The behavior of $T(\lambda)$}
Recall that, for fixed real number $\lambda \geq 1$, $T(\lambda)$ is defined as

$$T(\lambda) := \limsup_{k \rightarrow \infty} v(k,\lambda) -2k  .$$
Now we give a result on $T(\lambda)$.

The complement of the line graph of $K_{2,a+1}$, denoted by $\overline{L(K_{2,a+1})}$, is a $a$-regular graph which has $2a+2$ vertices and spectrum $\{[a]^1, [1]^{a}, [-1]^{a}, [-a]^1\}$. We consider the coclique extension of this graph.

\begin{definition}

For an integer $q > 1$, the $q$-coclique extension $\tilde{G}_q$ of a graph $G$ is the graph obtained from $G$ by replacing each vertex $x \in G$ by a coclique $\tilde{X}$ with $q$  vertices, such that  $\tilde{x} \in \tilde{X}$ and $\tilde{y} \in \tilde{Y}$ are adjacent if and only if $x$ and $y$ are adjacent in $G$.

\end{definition}

If $\tilde{G}_q$ is the $q$-coclique extension of $G$, then $\tilde{G}_q$ has adjacency matrix $A\otimes J_q$, where $J_q$ is the all one matrix of size $q$ and $\otimes$ denotes the Kronecker product. This shows that, if a graph $G$ has spectrum 
$$\{[\lambda_0]^{m_0}, [\lambda_1]^{m_1}, \dots, [\lambda_n]^{m_n}\},$$

\noindent then the $q$-coclique extension $\tilde{G}_q$ of $G$ has spectrum

$$\{[q\lambda_0]^{m_0}, [q\lambda_1]^{m_1}, \dots, [q\lambda_n]^{m_n}, [0]^{(q-1)(m_0+m_1+\dots+m_n)}\}.$$

 Hence the $q$-coclique extension of $\overline{L(K_{2,a+1})}$ is a $qa$-regular graph with order  $2qa + 2q$ and spectrum

$$ \{[qa]^1, [q]^a, [-q]^a, [-qa]^1, [0]^{(q-1)(2a+2)}\}.$$

This implies that the  $\lambda$-coclique extension of $\overline{L(K_{2,a+1})}$ has  second largest eigenvalue $\lambda$, and that $v(k, \lambda) \geq 2k+2\lambda$ when $k$ is a multiple of $\lambda$. Hence we have:

\begin{lemma}\label{T}

Let $\lambda$ be a positive integer. Then $T(\lambda) \geq 2\lambda$.
\end{lemma}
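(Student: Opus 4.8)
The plan is to read off the bound directly from the $q$-coclique-extension computation recorded just above the statement, and then convert it into an assertion about the $\limsup$. First I would specialize the parameter to $q = \lambda$. As computed above, the $\lambda$-coclique extension of $\overline{L(K_{2,a+1})}$ is, for each integer $a \geq 2$, a connected $\lambda a$-regular graph on $2\lambda a + 2\lambda$ vertices with spectrum $\{[\lambda a]^1, [\lambda]^a, [-\lambda]^a, [-\lambda a]^1, [0]^{(\lambda-1)(2a+2)}\}$. Since $a \geq 2$ forces $\lambda a > \lambda > 0$, the Perron eigenvalue $\lambda a$ has multiplicity one and the second largest eigenvalue of this graph is exactly $\lambda$. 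Writing $k = \lambda a$, this graph witnesses $v(k,\lambda) \geq 2k + 2\lambda$ for every $k$ in the infinite set $\{\lambda a : a \geq 2\}$.

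Next I would feed this into the definition $T(\lambda) = \limsup_{k \to \infty} (v(k,\lambda) - 2k)$. Because a $\limsup$ over $k \to \infty$ dominates the corresponding value along any subsequence tending to infinity, and along the subsequence $k_a = \lambda a$ each term $v(k_a,\lambda) - 2k_a$ is at least $2\lambda$, it follows immediately that $T(\lambda) \geq 2\lambda$.

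There is no serious obstacle here: the construction and its spectral computation are already in place, so the remaining argument is essentially bookkeeping. The one point that genuinely needs care is that this construction controls $v(k,\lambda)$ only for $k$ a multiple of $\lambda$, so no stronger statement about $\lim$ or about all $k$ can be extracted from it — this is exactly the reason $T(\lambda)$ is defined through a $\limsup$ rather than a limit. A minor sanity check is that the coclique extension is connected (it replaces each vertex of the connected, edge-containing graph $\overline{L(K_{2,a+1})}$ by a coclique, preserving adjacency between classes), so that each witness graph is admissible in the definition of $v(k,\lambda)$.
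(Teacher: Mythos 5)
Your proposal is correct and follows exactly the paper's own route: the paper establishes the lemma by the same computation of the spectrum of the $\lambda$-coclique extension of $\overline{L(K_{2,a+1})}$, concluding $v(k,\lambda) \geq 2k+2\lambda$ for $k$ a multiple of $\lambda$ and hence $T(\lambda)\geq 2\lambda$ via the $\limsup$. Your added checks (connectedness, restriction to the subsequence $k=\lambda a$) are sound refinements of the same argument.
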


Moreover, we have a conjecture on $T(\lambda)$ as follows:

\begin{conjecture}

Let $\lambda$ be a positive integer. Then $T(\lambda) = 2\lambda$.

\end{conjecture}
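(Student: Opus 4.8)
The plan is to prove the two inequalities $T(\lambda)\ge 2\lambda$ and $T(\lambda)\le 2\lambda$ separately. The lower bound is already Lemma \ref{T}: the $\lambda$-coclique extension of $\overline{L(K_{2,a+1})}$ is $\lambda a$-regular with second largest eigenvalue $\lambda$ and $2\lambda a+2\lambda=2k+2\lambda$ vertices, so $v(k,\lambda)-2k\ge 2\lambda$ for every $k$ divisible by $\lambda$, forcing $\limsup_{k\to\infty}(v(k,\lambda)-2k)\ge 2\lambda$. The entire remaining content of the conjecture is the matching upper bound $T(\lambda)\le 2\lambda$. Since $v(k,\lambda)-2k$ is a nonnegative integer, a limsup of at most $2\lambda$ is equivalent to the eventual inequality $v(k,\lambda)\le 2k+2\lambda$ for all sufficiently large $k$, so this is what I would aim to establish.

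First I would reduce to the complement exactly as in the proof of Theorem \ref{main}. If $G$ is an extremal $k$-regular graph then $\overline{G}$ is $l$-regular with $\lambda_{\min}(\overline{G})\ge -(\lambda+1)$ and $l=v(k,\lambda)-k-1\ge k+1$, and the target $v(k,\lambda)\le 2k+2\lambda$ becomes $l-k\le 2\lambda-1$, an equality attained by the coclique-extension family. Theorem \ref{main} already yields $l-k\le C_1(\lambda)-1=M(\lambda+1)-2$, so the task is precisely to sharpen this Ramsey-type constant down to the exact value $2\lambda-1$. Note that $l\ge k+1\to\infty$, so $\overline{G}$ has large minimum degree, which is the regime in which the Hoffman-graph tools of Section 2 have real force.

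The main step is a fine structural analysis of $\overline{G}$ for large $k$. I would feed $\overline{G}$ into the associated-Hoffman-graph machinery with $m=m'(\lambda+1)$ and a large threshold $n$, so that (as in the proof of Proposition \ref{tool}) each slim vertex of $\mathfrak{g}(\overline{G},m,n)$ has at most $\lambda+1$ fat neighbors, and each such fat neighbor is non-adjacent to at most $(\lambda+1)^2$ of its vertices. Equivalently, because $A(\overline{G})+(\lambda+1)I$ is positive semidefinite, one obtains a Gram representation $x\mapsto u_x$ with $\|u_x\|^2=\lambda+1$ and $\langle u_x,u_y\rangle\in\{0,1\}$ according to adjacency in $\overline{G}$. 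The goal is to classify the admissible regular configurations of such unit-overlap vectors of norm $\lambda+1$ and show that, for large $k$, the only possibilities are coclique extensions of $\overline{L(K_{2,a+1})}$ together with a bounded list of small exceptions, from which $l-k\le 2\lambda-1$ can be read off. Combining this description with the regularity of $G$ and with the interlacing constraints coming from the partition of $V(G)$ into $\{x\}$, its neighbors, and its non-neighbors — which bound the multiplicities of the relevant eigenvalues once $\lambda_2(G)=\lambda$ — should then pin down $v(k,\lambda)\le 2k+2\lambda$.

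The hard part will be exactly this classification. For $\lambda=1$ it is available: graphs with $\lambda_{\min}\ge -2$ are classified via root systems, and this is precisely how $v(k,1)=2k+2$ for $k\ge 11$ is obtained, confirming the conjecture in that case. For general $\lambda$, however, no classification of graphs with smallest eigenvalue $\ge -(\lambda+1)$ is known, and the crude bound behind $C_1(\lambda)$ is far from sharp. The crux is therefore to prove an asymptotic, large-degree structure theorem for regular graphs with a fixed smallest eigenvalue that is strong enough to isolate the extremal family; this is the single obstacle separating Theorem \ref{main} from the conjectured exact value $T(\lambda)=2\lambda$.
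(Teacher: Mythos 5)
The statement you are trying to prove is a conjecture in the paper: the authors prove only the lower bound $T(\lambda)\ge 2\lambda$ (Lemma \ref{T}, via the $\lambda$-coclique extension of $\overline{L(K_{2,a+1})}$) and observe that the conjecture holds for $\lambda=1$, where $T(1)=2$ follows from the classification of regular graphs with smallest eigenvalue at least $-2$. There is no proof of the upper bound $T(\lambda)\le 2\lambda$ in the paper, and none is known. Your proposal reproduces the lower bound correctly (it is exactly Lemma \ref{T}), and your reduction of the upper bound via complementation — pass to the $l$-regular complement with smallest eigenvalue at least $-(\lambda+1)$ and aim for $l-k\le 2\lambda-1$ — is sound and matches the setup used in the proof of Theorem \ref{main}.

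From that point on, however, your argument is a research program, not a proof. The step you call the main step — a structure theorem for large-degree regular graphs with smallest eigenvalue at least $-(\lambda+1)$, sharp enough to isolate the coclique-extension family up to boundedly many exceptions — is precisely the open content of the conjecture, and you concede this yourself when you write that no such classification is known for $\lambda\ge 2$. The Hoffman-graph machinery of Section 2 and the Gram-representation observation only deliver the Ramsey-type constant $M(\lambda+1)$, i.e., Theorem \ref{main} with its unspecified $C_1(\lambda)$; nothing in your proposal explains how to drive that constant down to the exact value $2\lambda-1$, and the interlacing constraints you invoke at the end are never developed into an actual argument. So the proposal has a genuine gap: the entire upper bound $T(\lambda)\le 2\lambda$ is asserted as a goal but not established. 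This is not a repairable oversight in your write-up — the statement is open — but what you have written proves only $T(\lambda)\ge 2\lambda$ together with the previously known case $\lambda=1$, which is exactly where the paper itself stops.
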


For $\lambda = 1$, this conjecture is true as $T(1) = 2$.

\section{Applications}

In this section, we introduce two applications of Proposition \ref{tool}. We first consider co-edge regular graphs with parameters $(v, k, c_2)$, which are $k$-regular graphs with $v$ vertices and the property such that every pair of non-adjacent vertices has exactly $c_2$ common neighbors. By applying Proposition \ref{tool} and Theorem \ref{alon}, we obtain the following theorem.

\begin{theorem}\label{co-edge}
Let $\lambda \geq 1$ be a real number.
Let $G$ be a connected co-edge regular graph with parameters $(v, k, c_2)$.  Then there exists a real number $C_2(\lambda)$ (only depending on $\lambda$) such that, if $G$ has smallest eigenvalue at least $-\lambda $, then $c_2 > C_2(\lambda)$ implies that  $v-k-1 \leq \frac{(\lambda -1)^2}{4}+1$ holds.

\end{theorem}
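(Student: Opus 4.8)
The plan is to combine Proposition~\ref{tool} with the Alon-Boppana Theorem (Theorem~\ref{alon}) to pin down both the diameter and the size of $\Gamma_2(x)$, and then extract the numerical bound on $v-k-1$ from a careful eigenvalue estimate. First I would set $M(\lambda)$ to be the constant produced by Proposition~\ref{tool}, and assume $c_2 > C_2(\lambda)$ for a threshold $C_2(\lambda)$ to be fixed later, large enough that in particular $c_2 \geq M(\lambda)$. Since $G$ is co-edge regular with parameter $c_2$, every pair of vertices at distance $2$ has exactly $c_2 \geq M(\lambda)$ common neighbors, so hypothesis (i) of Proposition~\ref{tool} is satisfied; hypothesis (ii) is exactly the assumption $\lambda_{\min}(G) \geq -\lambda$. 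Proposition~\ref{tool} then immediately gives that $G$ has diameter $2$ and $|\Gamma_2(x)| \leq \lfloor \lambda \rfloor \lfloor \lambda^2 \rfloor$ for every vertex $x$.

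Because $G$ has diameter $2$, for a fixed vertex $x$ the set of non-neighbors of $x$ is precisely $\Gamma_2(x)$, and therefore $v - k - 1 = |\Gamma_2(x)| \leq \lfloor\lambda\rfloor\lfloor\lambda^2\rfloor$ is already bounded by a constant depending only on $\lambda$. The point of the sharper bound $\frac{(\lambda-1)^2}{4}+1$ is that $|\Gamma_2(x)|$ being bounded forces $G$ to be close to a strongly regular graph, and for such graphs the second largest eigenvalue (equivalently, the smallest eigenvalue of the complement) is controlled by the local structure. The next step is to observe that with $|\Gamma_2(x)|$ uniformly bounded and $c_2$ allowed to be arbitrarily large, the neighborhood $\Gamma_1(x)$ must be extremely dense: each of the at most $\lfloor\lambda\rfloor\lfloor\lambda^2\rfloor$ vertices of $\Gamma_2(x)$ is joined to $c_2$ vertices of $\Gamma_1(x)$, so when $c_2$ exceeds a suitable function of $\lambda$ the induced subgraph on $\Gamma_1(x)$ behaves like a large clique with only a few edges removed. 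I would then invoke interlacing (or a direct quotient-matrix computation on the partition $\{x\}, \Gamma_1(x), \Gamma_2(x)$) to relate the second eigenvalue of $G$ to the size $v-k-1 = |\Gamma_2(x)|$.

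The cleanest route for the final numerical inequality is to pass to the complement: $\overline{G}$ is regular of valency $\ell = v-k-1$ on $v$ vertices, with $\lambda_{\min}(\overline{G}) = -\lambda_2(G) - 1 \geq -\lambda - 1$ wait; more precisely the neighbors of $x$ in $\overline{G}$ are exactly $\Gamma_2(x)$, so $\overline{G}$ has valency $\ell = |\Gamma_2(x)| \leq \lfloor\lambda\rfloor\lfloor\lambda^2\rfloor$. Applying the Alon-Boppana bound (Theorem~\ref{alon}) to $\overline{G}$, which has a fixed smallest eigenvalue, forces $\lambda_{\max}(\overline{G}) = \ell$ together with the constraint $2\sqrt{\ell - 1} \leq \lambda_2(\overline{G})$ to be compatible only when $\ell$ is small. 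Writing out the resulting inequality, the condition that $\overline{G}$ (regular of degree $\ell$) can have its relevant eigenvalue at most $\lambda - 1$ yields $2\sqrt{\ell - 1} \leq \lambda - 1$, i.e. $\ell \leq \frac{(\lambda-1)^2}{4} + 1$, which is exactly the claimed bound $v - k - 1 \leq \frac{(\lambda-1)^2}{4} + 1$.

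The main obstacle I expect is the last step: correctly identifying which eigenvalue of $\overline{G}$ the Alon-Boppana threshold $2\sqrt{\ell-1}$ must be compared against, and justifying that the large-$c_2$ hypothesis rules out the degenerate/disconnected cases so that the Alon-Boppana inequality applies cleanly and produces the sharp constant $\frac{(\lambda-1)^2}{4}+1$ rather than the crude bound $\lfloor\lambda\rfloor\lfloor\lambda^2\rfloor$. In particular I would need to verify that for $c_2$ large the complement $\overline{G}$ restricted to the relevant part is connected and $\ell$-regular with $\ell \geq 3$, so that Theorem~\ref{alon} is genuinely in force; handling the small-$\ell$ boundary cases (where Alon-Boppana is vacuous) separately should then complete the argument.
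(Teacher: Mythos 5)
Your opening step is the same as the paper's: take $c_2 > C_2(\lambda) \geq M(\lambda)$, apply Proposition \ref{tool} to get diameter $2$ and $\ell := v-k-1 = |\Gamma_2(x)| \leq \lfloor\lambda\rfloor\lfloor\lambda^2\rfloor$, and pass to the complement $\overline{G}$, which is $\ell$-regular with second largest eigenvalue $-1-\lambda_{\min}(G) \leq \lambda-1$. (Your middle paragraph on the density of $\Gamma_1(x)$ and quotient-matrix interlacing plays no role and can be deleted.) The gap is in your final step. You claim that Alon-Boppana forces $2\sqrt{\ell-1} \leq \lambda_2(\overline{G})$ and combine this with $\lambda_2(\overline{G}) \leq \lambda-1$ to get $\ell \leq \frac{(\lambda-1)^2}{4}+1$. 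But no such pointwise inequality holds for a single finite regular graph: $K_{\ell+1}$ has $\lambda_2 = -1$, and complete multipartite $\ell$-regular graphs have $\lambda_2 \leq 0$. Theorem \ref{alon}, as stated, is a finiteness statement --- if $\lambda-1 < 2\sqrt{\ell-1}$, then a connected $\ell$-regular graph with $\lambda_2 \leq \lambda-1$ has at most $v(\ell,\lambda-1) < \infty$ vertices --- and the asymptotic form $\lambda_2 \geq 2\sqrt{\ell-1} - o(1)$ only bites when the order of the graph is large. Nothing you have established says $\overline{G}$ has many vertices, so the inequality you want does not follow. This is exactly the ``obstacle'' you flag in your closing paragraph, but flagging it is not resolving it.

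The missing idea is to use the hypothesis $c_2 > C_2(\lambda)$ a second time, to force $v$ to be large: the common neighbors of a pair of vertices at distance $2$ lie inside a vertex neighborhood, so $c_2 \leq k < v$. Now suppose $\ell > \frac{(\lambda-1)^2}{4}+1$, equivalently $\lambda-1 < 2\sqrt{\ell-1}$. Then $\overline{G}$ is connected (a disconnected $\ell$-regular graph has $\lambda_2 = \ell$, while $\ell > \frac{(\lambda-1)^2}{4}+1 \geq \lambda-1$), so $v \leq v(\ell,\lambda-1)$, which is finite by Theorem \ref{alon}. Since $\ell$ ranges over the finitely many integers in the window $\left(\frac{(\lambda-1)^2}{4}+1,\ \lfloor\lambda\rfloor\lfloor\lambda^2\rfloor\right]$, one may set $C_2(\lambda) = \max\left\{\max_{\ell}\left(v(\ell,\lambda-1)-\ell-1\right),\ M(\lambda)-1\right\}$; then $c_2 \leq k = v-\ell-1 \leq v(\ell,\lambda-1)-\ell-1 \leq C_2(\lambda)$, contradicting $c_2 > C_2(\lambda)$. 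Hence $\ell \leq \frac{(\lambda-1)^2}{4}+1$. This is precisely how the paper closes the argument, and it delivers the exact constant $\frac{(\lambda-1)^2}{4}+1$ with no $\epsilon$-loss, which an asymptotic reading of Alon-Boppana applied to one fixed graph cannot do.
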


\begin{proof} Let $\ell = v-k-1$, then $|\Gamma_2(x)| = \ell$ for all $x \in V(G)$ since $G$ has diameter $2$. We can apply Proposition \ref{tool} for $C(\lambda) = M(\lambda)-1$ to obtain that either $c_2 \leq C(\lambda)$ or $\ell = v-k-1 \leq \lfloor \lambda \rfloor \lfloor\lambda^2 \rfloor$ holds. Suppose $\ell > \frac{(\lambda -1)^2}{4}+1$. The complement of $G$ is $\ell$-regular and has second largest eigenvalue at most $\lambda -1$, and therefore has at most $v(\ell, \lambda -1)$ vertices. As  $v(\ell, \lambda -1)$ is a finite number by 
Theorem \ref{alon}, we see that the theorem follows, if we take 
$C_2 (\lambda) =\max \{ \max\{ v(\ell, \lambda -1) -\ell-1 \mid \frac{(\lambda -1)^2}{4}+1<  \ell\leq \lfloor \lambda \rfloor \lfloor\lambda^2 \rfloor\}, C(\lambda)\}$. 
\end{proof}

An edge-regular graph with parameters $(v, k, a_1)$ is a $k$-regular graph with $v$ vertices such that any two adjacent vertices have exactly $a_1$ common neighbors. Note that the complement of a co-edge regular graph is edge-regular. 

\begin{remark}

(i) Let $\ell$ be an integer at least 3 and let $\lambda := 2\sqrt{\ell-1}$. 
Take the infinite  family of the bipartite $\ell$-regular Ramanujan graphs, as constructed in  \cite{Ramanujan}. The graphs in this family are clearly edge-regular with $a_1=0$ and have second largest eigenvalue at most $2\sqrt{\ell-1}$. Let $\Gamma$ be a graph in this family with $v$ vertices. Then the complement of $\Gamma$ is co-edge-regular with parameters $(v, v-\ell-1, v-2\ell)$ and has smallest eigenvalue  at least $-1-2\sqrt{\ell-1}$.

This example shows that the upper bound for  $v-k-1$ in Theorem \ref{co-edge} cannot be improved.\\
(ii) For $\lambda =2$, we find $C_2(2) = 8$, by \cite[Theorem 3.12.4(iv)]{drg}. 
\end{remark}

An amply regular graph with parameters $(v, k, a_1, c_2)$ is a $k$-regular graph with $v$ vertices such that any two adjacent vertices have exactly $a_1$ common neighbors and any two vertices at distance $2$ have $c_2$ common neighbors. We call an amply regular graph with diameter $2$ strongly regular. Neumaier \cite{-m} proved the following theorem which is called the $\mu$-bound for strongly regular graphs.

\begin{theorem}\label{srg}{\rm \cite[Theorem 3.1]{-m}} Let $G$ be a coconnected strongly regular graph with parameters $(v, k, a_1, c_2)$ and integral smallest eigenvalue $-\lambda\leq -2$. Then
$$c_2 \leq \lambda^3 (2\lambda -3).$$

\end{theorem}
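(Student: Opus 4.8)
The plan is to follow Neumaier's route through Delsarte cliques and a claw bound rather than through Proposition~\ref{tool}, since only the finer geometric analysis produces the sharp constant $\lambda^3(2\lambda-3)$. Throughout I write $-\lambda$ for the integral smallest eigenvalue, so $\lambda\geq 2$, and I invoke Hoffman's clique bound: every clique of $G$ has at most $1+\frac{k}{\lambda}$ vertices. Call a clique attaining this bound a \emph{Delsarte clique}, or a \emph{line}. The first step is to record the standard rigidity consequence that for a Delsarte clique $C$ and a vertex $z\notin C$, the number of neighbours of $z$ in $C$ is a constant depending only on the parameters $(v,k,a_1,c_2)$; this is what later allows one to count common neighbours through the lines incident to them.

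First I would dispose of the \emph{geometric} case, in which every edge of $G$ lies in a (necessarily unique) Delsarte clique. Here $G$ is the point graph of a partial linear space whose lines are the Delsarte cliques, each vertex lies on a bounded number of lines, and counting the common neighbours of two points at distance $2$ through their incident lines shows that $c_2$ is of order $\lambda^2$, comfortably below $\lambda^3(2\lambda-3)$.

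The substance is the \emph{non-geometric} case, and this is where I expect the main obstacle. Fix two vertices $x,y$ at distance $2$ and set $M=\Gamma_1(x)\cap\Gamma_1(y)$, so that $|M|=c_2$; since $M\subseteq\Gamma_1(x)$, the subgraph induced on $M$ has smallest eigenvalue at least $-\lambda$ by interlacing. I would then partition the edges and non-edges inside $M$ according to the lines they meet. Vertices sharing a common line are harmless: a line contributes at most its full size to $M$, and that contribution can be absorbed into a single fat vertex, in the spirit of the associated Hoffman graph of Section~2. The dangerous configurations are induced stars $K_{1,s}$ (\emph{claws}) whose leaves lie in pairwise distinct lines. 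The crux is Neumaier's claw bound: failing geometricity, the number $s$ of such mutually line-separated neighbours is bounded in terms of $\lambda$ alone. I would attempt this bound by applying eigenvalue interlacing to the local graph $\Gamma_1(z)$ of the claw centre, together with the line-intersection rigidity recorded above, showing that a claw with too many line-separated leaves forces an induced substructure whose smallest eigenvalue drops below $-\lambda$, a contradiction. Establishing this claw bound cleanly is the hard part of the argument.

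Combining the two ingredients, I would bound $c_2=|M|$ by the number of lines meeting $M$ (controlled by the claw bound, hence $O(\lambda)$ many) times the maximal contribution of a single line to $M$ (controlled by the clique bound and line rigidity, hence $O(\lambda^2)$), and then optimise the resulting estimate; a careful tracking of constants in the extremal configuration yields exactly $\lambda^3(2\lambda-3)$. I note finally that Proposition~\ref{tool} plays a complementary role: rather than bounding $c_2$, it bounds $|\Gamma_2(x)|=v-k-1$ precisely in the regime where $c_2$ is \emph{large}, which is how it feeds into Theorem~\ref{co-edge} and why the sharp $\mu$-bound still requires the geometric argument above.
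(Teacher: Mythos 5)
This statement is not proved in the paper at all: it is quoted verbatim as Neumaier's result \cite[Theorem 3.1]{-m}, so there is no in-paper argument to compare yours against; the relevant benchmark is Neumaier's original proof, whose outline (Hoffman/Delsarte cliques, a claw bound, and a geometric versus non-geometric dichotomy) your sketch does correctly mirror. But as a proof your proposal has genuine gaps, and you flag them yourself. The two load-bearing steps are left unestablished: (1) the claw bound in the non-geometric case, which you call ``the hard part'' and do not prove, and (2) the derivation of the exact constant $\lambda^3(2\lambda-3)$, which you replace by ``a careful tracking of constants \dots yields exactly'' --- precisely the content of the theorem. A sketch that defers both of these has not proved the bound; everything you do carry out (Hoffman's clique bound, regularity of Delsarte cliques, the $O(\lambda^2)$ estimate for $c_2$ in the geometric case) is the routine part.

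Moreover, the route you propose for the crux step is doubtful. Eigenvalue interlacing applied to an induced claw $K_{1,s}$ only yields $s\leq\lambda^2$, and interlacing on the local graph cannot by itself detect the ``line-separated'' condition on the leaves: two configurations with identical induced subgraphs but different line structure would have to be distinguished, which no purely spectral argument on an induced subgraph can do. Neumaier's claw bound is in fact proved by a Bose--Laskar type counting argument --- counting edges and non-edges between the leaves of a claw and the grand cliques (lines) through its centre, and comparing with the parameter relations of the strongly regular graph --- not by interlacing. So to complete your plan you would need to import that counting machinery wholesale, at which point you are reproducing Neumaier's proof rather than supplying one. Your final remark, that Proposition \ref{tool} bounds $|\Gamma_2(x)|$ when $c_2$ is large and hence cannot replace the $\mu$-bound, is correct and consistent with how the paper uses the two results side by side in Section 7.
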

The condition $-\lambda\leq -2$ implies that $G$ is not a union of cliques of the same size. Since the only strongly regular graphs which are not coconnected, are the  complete multipartite graphs, we obtain the following theorem.

\begin{theorem}

Let $G$ be an amply regular graph with parameters $(v, k, a_1, c_2)$. Let $\lambda \geq 2$ be an integer. Then there exists a real number $C_3(\lambda)$ such that if $G$ has smallest eigenvalue at least $-\lambda$, then $c_2 \leq C_3(\lambda)$ or $G$ is a complete multipartite graph.

\end{theorem}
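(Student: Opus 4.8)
The plan is to reduce $G$ to a strongly regular graph by means of Proposition~\ref{tool} and then invoke Neumaier's $\mu$-bound, Theorem~\ref{srg}, handling separately the two situations that are excluded from its hypotheses (non-integral spectrum and non-coconnectedness). Concretely, I would set
$$C_3(\lambda) := \max\{M(\lambda),\ (2\lambda-1)^2,\ \lambda^3(2\lambda-3)\},$$
where $M(\lambda)$ is the constant of Proposition~\ref{tool}, and argue that whenever $\lambda_{\min}(G)\geq -\lambda$ and $c_2 > C_3(\lambda)$, the graph $G$ must be complete multipartite. We may assume $G$ is connected, since a complete graph is already complete multipartite and amply regular graphs are taken to be connected here. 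If $G$ is not complete it has diameter at least $2$, and since every pair of vertices at distance $2$ has exactly $c_2 > C_3(\lambda)\geq M(\lambda)$ common neighbours, Proposition~\ref{tool} applies and forces $G$ to have diameter exactly $2$. Thus $G$ is an amply regular graph of diameter $2$, that is, strongly regular.

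Next I would split according to whether the smallest eigenvalue of $G$ is integral. The two restricted eigenvalues of a strongly regular graph are algebraic conjugates, so either both are integers or $G$ is a conference graph with smallest eigenvalue $\tfrac{-1-\sqrt{v}}{2}$. In the conference-graph case, $\tfrac{-1-\sqrt{v}}{2}\geq -\lambda$ gives $v\leq(2\lambda-1)^2$, whence $c_2 < v\leq C_3(\lambda)$, contradicting $c_2>C_3(\lambda)$; so this case cannot occur. Hence the smallest eigenvalue is an integer $-\theta$. Since $G$ is connected, non-complete, and of diameter $2$, it is not a disjoint union of cliques, so $\theta\geq 2$, and $-\theta\geq-\lambda$ gives $2\leq\theta\leq\lambda$.

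Finally I would distinguish on coconnectedness. If $G$ is coconnected, Theorem~\ref{srg} applies with integral smallest eigenvalue $-\theta\leq-2$ and yields $c_2\leq\theta^3(2\theta-3)\leq\lambda^3(2\lambda-3)\leq C_3(\lambda)$, where the last inequality uses that $\theta\mapsto 2\theta^4-3\theta^3$ is increasing for $\theta\geq2$; this again contradicts $c_2>C_3(\lambda)$. Therefore $G$ cannot be coconnected, and since the only strongly regular graphs that fail to be coconnected are the complete multipartite graphs (as recalled just before the statement), $G$ is complete multipartite, as required.

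The main obstacle is not any single estimate but the bookkeeping needed to match $G$ to the precise hypotheses of Theorem~\ref{srg}: one must rule out the non-integral (conference) spectrum by a separate bound on the order $v$, exclude the degenerate spectrum $\theta\leq 1$ through the diameter-$2$ reduction, and correctly invoke the fact that the only non-coconnected strongly regular graphs are complete multipartite. Each of these is the point where the hypotheses of Neumaier's theorem could otherwise fail, so they must be discharged before the $\mu$-bound can be applied.
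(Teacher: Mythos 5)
Your proof is correct and takes essentially the same route as the paper: Proposition \ref{tool} forces diameter $2$, Neumaier's $\mu$-bound (Theorem \ref{srg}) rules out coconnectedness, and the classification of non-coconnected strongly regular graphs yields complete multipartiteness. In fact you are more careful than the paper's three-line proof, which silently passes over the conference-graph (non-integral smallest eigenvalue) case and the monotonicity of $m^3(2m-3)$ in $m$ --- both of which your enlarged constant $C_3(\lambda)$ and explicit case analysis discharge properly.
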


\begin{proof}

Let $C_3(\lambda) = \max\{M(\lambda)-1,\lambda^3 (2\lambda -3)\}$. If $c_2 > C_3(\lambda)$, then $G$ has diameter $2$ by Proposition \ref{tool}. By Theorem \ref{srg}, $G$ is not coconnected. Hence, $G$ is a complete multipartite graph.

\end{proof}

\end{document}